\documentclass{amsart}

\usepackage{amsmath,amsthm,amsfonts,amssymb}

\usepackage{mathtools} 
\usepackage{mathrsfs} 

\usepackage[active]{srcltx} 
\usepackage[pdftex,bookmarks=true]{hyperref} 


\usepackage{graphicx}


 \usepackage{enumerate}
\usepackage{macros}
\usepackage{lipsum}
\usepackage{bbm}
\usepackage{lipsum}
\usepackage{amsfonts}
\usepackage{enumitem}
\usepackage{amsmath}
\usepackage{graphicx}
\usepackage{epstopdf}
\usepackage{algorithmic}
\ifpdf
  \DeclareGraphicsExtensions{.eps,.pdf,.png,.jpg}
\else
  \DeclareGraphicsExtensions{.eps}
\fi

\newtheorem{theorem}{Theorem}[section]
\newtheorem{lemma}[theorem]{Lemma}
\newtheorem{proposition}[theorem]{Proposition}

\newtheorem{definition}[theorem]{Definition}

\newtheorem{remark}[theorem]{Remark}

\numberwithin{equation}{section}

\title[OPTIMAL POINTWISE ERROR BOUNDS AND DIFFERENCE QUOTIENTS FOR POD]{On Optimal Pointwise in Time Error Bounds and
Difference Quotients for the Proper Orthogonal Decomposition}


\author[B. KOC]{BIRGUL KOC}
\address{Department of Mathematics, Virginia Tech, USA}
\email{birgul@vt.edu}
\urladdr{https://intranet.math.vt.edu/people/birgul/}
\thanks{Funding: The research of the first and fifth authors was supported by the National Science
Foundation under grant DMS-2012253. The research of the second author was supported by Spanish MCINYU under grant RTI2018-093521-B-C31 and Spanish State Research Agency through the
national programme Juan de la Cierva-Incorporaci\'{o}n 2017. This work has also been supported by
European Union’s Horizon 2020 research and innovation programme under the Marie Sklodowska–
Curie Actions, grant agreement 872442 (ARIA). The work of the third author was supported by the
National Science Foundation under grant DMS-1439786 and by the Simons Foundation under grant
50736 while the third author was in residence at the Institute for Computational and Experimental
Research in Mathematics in Providence, RI, during the “Model and dimension reduction in uncertain
and dynamic systems” program.}

\author[S. RUBINO]{SAMUELE RUBINO}
\address{Departamento EDAN \& IMUS, Universidad de Sevilla, Spain}
\email{samuele@us.es}
\urladdr{https://www.imus.us.es/en/fichapersonal/samuele}
\thanks{}

\author[M. SCHNEIER]{MICHAEL SCHNEIER}
\address{Department of Mathematics, University of Pittsburgh, USA}
\email{mhs64@pitt.edu}
\urladdr{https://www.mathematics.pitt.edu/people/michael-schneier}
\thanks{}

\author[J. R. SINGLER]{JOHN R. SINGLER}
\address{Department of Mathematics and Statistics, Missouri University of Science and Technology, USA}
\email{singlerj@mst.edu}
\urladdr{https://people.mst.edu/faculty/singlerj/index.html}
\thanks{}

\author[T. ILIESCU]{TRAIAN ILIESCU}
\address{Department of Mathematics, Virginia Tech, USA}
\email{iliescu@vt.edu}
\urladdr{https://intranet.math.vt.edu/people/iliescu/}
\thanks{}

\subjclass[2010]{Primary 65M15, 65M60}

\keywords{Proper Orthogonal Decomposition, Reduced Order Models, Error Analysis, Optimality.}

\date{}

\dedicatory{}


\begin{document}

\maketitle

\begin{abstract}
In this paper, we resolve several long standing issues dealing with optimal pointwise in time error bounds for proper orthogonal decomposition (POD) reduced order modeling 
of the heat equation.
In particular, we study the role played by difference quotients (DQs) in obtaining reduced order model (ROM) error bounds that are optimal with respect to both the time discretization error and the ROM discretization error.
When the DQs are not used, we prove that both the ROM projection error and the ROM error are suboptimal.
When the DQs are used, we prove that both the ROM projection error and the ROM error are optimal.
The numerical results for the heat equation support the theoretical results.
\end{abstract}

\section{Introduction}

In this paper, we consider the one-dimensional heat equation
\begin{eqnarray}
    u_t 
    -\nu \, u _{xx} 
    = f  \, , 
    \label{eqn:heat}
\end{eqnarray}
where the spatial domain is $[0,1]$, the time domain is $[0,T]$, and $\nu$ is the diffusion coefficient.
For simplicity, we consider homogeneous Dirichlet boundary conditions $u(0,t) = u(1,t) = 0$ for $ t > 0 $ and given initial conditions $u(x,0) = u_0(x)$.
We emphasize that, although our theoretical developments and numerical illustrations are for the heat equation, we believe that our analysis can be extended to more general parabolic equations, e.g., the Navier-Stokes equations.

We also consider projection reduced order models (ROMs) for the heat equation.
Specifically, we consider the proper orthogonal decomposition (POD)~\cite{HLB96}, which can be summarized as follows:
(i) The full order model (FOM) for~\eqref{eqn:heat} is run for selected parameter values and/or time intervals to generate a set of snapshots $\{ u^{0}, u^{1}, \ldots, u^{N} \}$;
(ii) These snapshots and the singular value decomposition (SVD) are used to construct an orthonormal ROM basis $\{ \varphi_{1}, \ldots, \varphi_{s} \}$ for a Hilbert space $ \mathcal{H} $, where $s$ is the rank of the snapshot matrix;
(iii) The ROM approximation
\begin{eqnarray}
    u(x,t_n)
    \approx u_{r}^{n}(x)
    = \sum_{j=1}^{r} u_{j}^{n} \, \varphi_{j}(x) \, ,
    \qquad
    n = 1, \ldots, N \, ,
    \label{eqn:introduction-1}
\end{eqnarray}
where $r < s$ is the ROM dimension, is used together with a Galerkin projection and a time discretization to yield a system of equations for $u_{j}^{n}$, which are the sought ROM coefficients.

\begin{definition}[Generic Constant $C$]
For clarity, in what follows, we will denote by $C$ a generic positive constant that may vary from a line to another, but which is always independent of the discretization parameters. 
	\label{def:constant}
\end{definition}

In the pioneering paper~\cite{KV01}, Kunisch and Volkwein laid the foundations of numerical analysis for POD 
(see, e.g.,~\cite{KV02,luo2008mixed,rathinam2003new} for relevant work).
In particular, for the ROM error 
\begin{eqnarray}
    e^{n}(x)
    = u(x , t_n) - u_{r}^{n} (x) \, ,
    \qquad
    n = 1, \ldots, N \, ,
    \label{eqn:introduction-1b}
\end{eqnarray}
they proved the following error bound (see Theorem 7 in~\cite{KV01}):
\begin{align}
      \hspace*{0.25cm}
   \frac{1}{N+1} \sum_{n=1}^{N}\| e^{n} \|_{L^2}^2
    \leq C \, \biggl( 
    \text{time discretization error}
    + \text{ROM discretization error}
    \biggr) .
    \label{eqn:introduction-2}
\end{align}
This estimate was later extended to include the spatial discretization error and a pointwise in time estimate in~\cite{iliescu2014variational},
(see, e.g.,~\cite{kostova2018model,rathinam2003new} for alternative pointwise in time estimates)
i.e., 
\begin{eqnarray}
    \| e^{n} \|_{L^2}
    \leq C \, \biggl( 
    && \text{space discretization error}
    + \text{time discretization error}
    \nonumber \\
    &+& \, \text{ROM discretization error}
    \biggr) \, .
    \label{eqn:introduction-temp}
\end{eqnarray}
%

Estimate \eqref{eqn:introduction-temp} relied on an assumption about the POD projection error, which roughly says that the POD projection error at each time step is of the same order as the POD projection error at the remaining time steps.  This assumption has since been generally used in proving pointwise in time error bounds for parabolic equations.

We emphasize that the error bound~\eqref{eqn:introduction-temp} includes all three ROM error sources:
(i) the space discretization error, which results from the spatial discretization of the heat equation~\eqref{eqn:heat} with classical numerical methods, e.g., finite elements (FEs);
(ii) the time discretization error, which results from the time discretization of the heat equation~\eqref{eqn:heat} with classical numerical methods, e.g., Euler or Crank-Nicolson methods; and
(iii) the ROM discretization error, which results from the truncation in~\eqref{eqn:introduction-1}.  


A fundamental issue in the POD numerical analysis is the {\it optimality} of the error bound~\eqref{eqn:introduction-temp}.
We emphasize that there are three types of optimality, corresponding to the three types of discretization levels:
(i) space discretization optimality;
(ii) time discretization optimality; and 
(iii) ROM discretization optimality.
We discuss each optimality type below:

\paragraph{\it{Space Discretization Optimality}}
For simplicity, we consider a FE spatial discretization.
We emphasize, however, that other standard numerical methods (e.g., finite difference,  spectral, or spectral element methods) could be considered.
An error bound is optimal with respect to the spatial discretization if the error scalings with respect to the spatial discretization parameters only are of the following form:
\begin{eqnarray}
    && \| e^{n} \|_{L^2}
    = \mathcal{O} (h^{m+1}) \, ,
    \label{eqn:introduction-4} \\[0.3cm]
    && \| \nabla e^{n} \|_{L^2}
    = \mathcal{O} (h^{m}) \, ,
    \label{eqn:introduction-5}
\end{eqnarray}
where $h$ is the size of the FE mesh and $m$ is the FE order.
Proving estimates that are optimal with respect to the spatial discretization is relatively straightforward (see, e.g.,~\cite{giere2015supg,iliescu2013variational,iliescu2014variational}), since it follows the standard FE numerical analysis~\cite{thomee2006galerkin}.
Thus, the spatial discretization error component is generally ignored in POD numerical analysis papers (see, e.g.,~\cite{KV01}).
To simplify the presentation, we will not discuss the spatial discretization optimality in this paper.
We note, however, that our results can be extended in a straightforward mannner to include the spatial discretization optimality.

\paragraph{\it{Time Discretization Optimality}}
An error bound is optimal with respect to the time discretization if the error scalings with respect to the time discretization parameters only are of the following form:
\begin{eqnarray}
    && \| e^{n} \|_{L^2}
    = \mathcal{O} (\Delta t^{k}) \, ,
    \label{eqn:introduction-6} 
\end{eqnarray}
where $\Delta t$ is the time step size used in the time discretization, and $k$ is the time discretization order (e.g., $k=1$ for Euler's method, and $k=2$ for Crank-Nicolson).


The importance of the time discretization optimality was recognized early on.
In Remark 1 of~\cite{KV01}, Kunisch and Volkwein proposed the {\it difference quotients (DQs)}  (i.e., scaled snapshots of the form $(u^{n} - u^{n-1}) / \Delta t, \, n = 1, \ldots, N$) as a means to achieve time discretization optimality.
Specifically, on page 121 of~\cite{KV01}, the authors noted that, in the DQ case (i.e., if the DQs are used to build the POD basis), time discretization optimal error bounds of the type~\eqref{eqn:introduction-6} follow.
However, in the noDQ case (i.e., if the DQs are not used), 
the error bound has a suboptimal $(\Delta t^{-1})$ factor. 


A major development in the study of POD optimality was made by Chapelle, Gariah, and Sainte-Marie in~\cite{chapelle2012galerkin}.
The authors showed that using the $L^2$ projection instead of the Ritz projection used in~\cite{KV01} (which is standard in the FE numerical analysis~\cite{thomee2006galerkin,wheeler1973priori}) 
avoids the difficulties posed by the POD  approximation of the time derivative, and eliminates the need to use DQs 
to achieve time discretization optimality. 

\paragraph{\it{ROM Discretization Optimality}} The first discussion of the ROM discretization optimality was presented in~\cite{iliescu2014are}.  In that work, a pointwise in time error bound was said to be optimal with respect to the ROM discretization if the error scalings with respect to the ROM discretization parameters only take one of the following forms:
\begin{eqnarray}
    && \| e^{n} \|_{L^2}^{2}
    = \mathcal{O} \Bigg( \frac{1}{N+1} \, \sum_{n=0}^{N} \| \eta^{proj}(t_{n}) \|_{L^2}^{2} \Bigg)
    = \mathcal{O} \Bigg( \sum_{i=r+1}^{s} \lambda_i \Bigg) \, ,
    \label{eqn:introduction-7} \\[0.3cm]
    && \| \nabla e^{n} \|_{L^2}^{2}
    = \mathcal{O} \Bigg( \frac{1}{N+1} \, \sum_{n=0}^{N} \| \nabla \eta^{proj}(t_{n}) \|_{L^2}^{2} \Bigg) 
        = \mathcal{O} \Bigg( \sum_{i=r+1}^{s} \lambda_i \| \nabla \varphi_i \|_{L^2}^{2} \Bigg) \, ,
    \label{eqn:introduction-8}
\end{eqnarray}
where $\eta^{proj}$ is the {\it POD projection error}, which is defined as
\begin{eqnarray}
   \eta^{proj}(x,t)  
    =  u(x,t)- \sum_{i=1}^{r} \Big( u(\cdot,t), \varphi_i(\cdot) \Big)_{\mathcal{H}} \varphi_i(x) \, ,
    \label{eqn:projection-error} 
\end{eqnarray}
and $\lambda_i$ and $ \varphi_i $ are POD eigenvalues and modes.
The first significant development in the study of POD optimality was made in~\cite{iliescu2014are}, where it was shown that not using the DQs yields error bounds that may be optimal with respect to the time discretization 
(using the technique from~\cite{chapelle2012galerkin}), 
but are suboptimal with respect to the ROM discretization.
Specifically, in the noDQ case, it was shown in~\cite{iliescu2014are} that 
\begin{align}
    \| e^{n} \|_{L^2}^{2}
    = \mathcal{O} \Bigg( \frac{1}{N+1} \, \sum_{n=0}^{N} \| \nabla \eta^{proj}(t_{n}) \|_{L^2}^{2} \Bigg) 
        = \mathcal{O} \Bigg( \sum_{i=r+1}^{s} \lambda_i \| \nabla \varphi_i \|_{L^2}^{2} \Bigg) \, ,
    \label{eqn:introduction-9}
\end{align}
which is suboptimal with respect to the ROM discretization.
Furthermore, in the DQ case, it was shown~\cite{iliescu2014are} that 
\begin{eqnarray}
    \| e^{n} \|_{L^2}^{2}
    = \mathcal{O} \Bigg( \frac{1}{N+1} \, \sum_{n=0}^{N} \| \eta^{proj}(t_{n}) \|_{L^2}^{2} \Bigg) 
        = \mathcal{O} \Bigg( \sum_{i=r+1}^{s} \lambda_i  \Bigg) \, ,
    \label{eqn:introduction-10}
\end{eqnarray}
which is optimal with respect to the ROM discretization.  However, two  assumptions on the POD projection errors were made in order to establish these results.


To summarize, the current state-of-the-art in POD optimality {\it suggests} that 
\begin{eqnarray}
    \boxed{\text{DQs are needed for optimal POD  error bounds.}}
    \label{eqn:conjecture}
\end{eqnarray}

We emphasize that, to our knowledge, \eqref{eqn:conjecture} {\it has never been proved}.
Indeed, \cite{KV01} focused on the time discretization optimality, but ignored the ROM discretization optimality.
Specifically, the authors proved that using DQs yields error bounds that are optimal with respect to the time discretization, but not necessarily with respect to the ROM discretization.
In~\cite{chapelle2012galerkin}, the authors considered the noDQ case and developed a framework that yields error bounds
that are optimal with respect to the time discretization, but not necessarily with respect to the ROM discretization.
A completely different approach was taken in~\cite{iliescu2014are}, where the focus was on ROM discretization optimality, without considering the time discretization optimality.
Specifically, in~\cite{iliescu2014are} it was shown both theoretically and numerically that, in the noDQ case the error bounds are suboptimal with respect to the ROM discretization error, whereas in the DQ case the error bounds are optimal.
The time discretization optimality was ignored in~\cite{iliescu2014are}.


In this paper, we prove~\eqref{eqn:conjecture}. Specifically, we make three main contributions:


First, in the noDQ case, we prove that the POD error bound is suboptimal not only with respect to the ROM discretization (as shown in~\cite{iliescu2014are}), but also with respect to the time discretization.
Specifically, we show that the scaling of the error bound~\eqref{eqn:introduction-9} with respect to the ROM discretization can degrade to
\begin{eqnarray}
    \| e^{n} \|_{L^2}^{2}
    = \mathcal{O} \Bigg( \Delta t ^{-1} \, \sum_{i=r+1}^{s} \lambda_i \Bigg)
    + \mathcal{O} \Bigg( \sum_{i=r+1}^{s} \lambda_i \| \nabla \varphi_i \|_{L^2}^{2} \Bigg) \, .
    \label{eqn:introduction-11}
\end{eqnarray}
In particular, we construct two analytical examples, and we prove that they satisfy~\eqref{eqn:introduction-11} in the noDQ case.
We note that the bound~\eqref{eqn:introduction-11} is a significant improvement over the bound~\eqref{eqn:introduction-9} proved in~\cite{iliescu2014are}, since the latter did not display the time discretization suboptimality.


Our second main contribution is that we prove new pointwise in time error bounds in the DQ case, and we do not require any of the assumptions used in \cite{iliescu2014are} to establish similar pointwise bounds.  All of these error bounds are optimal with respect to the time discretization.  
One key component of our analysis is that we prove that an assumption from \cite{iliescu2014are,iliescu2014variational} concerning pointwise in time behavior of POD projection errors is automatically satisfied in the DQ case.


Our third main contribution is that we revisit the definition of ROM discretization error optimality, introduce a new stronger notion of optimality, and show that all of the pointwise in time error bounds in the DQ case are optimal in at least one sense.  Both pointwise in time error bounds using the $ H^1_0 $ norm are optimal in the new stronger sense; the pointwise in time bounds using the $ L^2 $ norm can be optimal in either sense.  We note that to prove the stronger optimality of the $ L^2 $ error bounds, we do need a uniform boundedness assumption of the type made in \cite{iliescu2014are}.


We emphasize that, although our theoretical developments and numerical illustrations are for the heat equation, we believe that our analysis can be extended to more general parabolic equations, e.g., the Navier-Stokes equations.


\smallskip

\paragraph{\it{DQs in Applications}}
The focus of this paper is on the role played by DQs in the POD numerical analysis.
We emphasize, however, that DQs are also widely used in practical applications.

Probably the most important use of DQs in practical computations is in {\it hyperreduction} methods for ROMs of nonlinear systems of the form $y'=f(t,y)$.
Hyperreduction methods~\cite{yano2019discontinuous} significantly decrease the computational cost of the nonlinear ROM operator evaluations, which can be prohibitive in realistic applications.
Popular hyperreduction methods (e.g., the empirical interpolation method (EIM)~\cite{barrault2004eim} and its discrete counterpart, the discrete empirical interpolation method (DEIM)~\cite{chaturantabut2012state})  use the nonlinear snapshots $f(t,y)$ to construct accurate approximations of the nonlinear ROM operators. 
As noted on page 48 in~\cite{chaturantabut2012state}, since $f(t,y) = y'$ and $(y^{n+1} - y^{n}) / \Delta t \approx y'$,
using nonlinear snapshots is similar to including the DQs.
The DQs' connection to nonlinear snapshots was also used in~\cite{choi2020sns} to develop the  solution-based nonlinear subspace (SNS) method as an efficient alternative to classical hyperreduction techniques.
The SNS method was used in the reduced order modeling of the nonlinear diffusion equation and the parameterized quasi-1D Euler equation.

The DQs were explicitly used in various practical applications. 
For example, the DQs were utilized to develop data-driven ROMs for turbulent flows, in which the eddy viscosity field is a function of the time history of the velocity field (see Section 3.3 in~\cite{hijazi2020data}).
The DQs were also used in the reduced order modeling of the FitzHugh–Nagumo equations, which are used to model the dynamics of a spiking neuron (see Section 4 in~\cite{kostova2018model}). 
Furthermore, the DQs were employed to construct ROMs for the control of laser surface hardening~\cite{homberg2003control},
for feedback control of various PDEs~\cite{LeibfritzVolkwein07}, for partial integro-differential equations arising in financial applications~\cite{sachs2013priori}, for subdiffusion equations~\cite{JinZhou17}, for convection-diffusion equations~\cite{ZhuDedeQuarteroni17},  for wave equations~\cite{herkt2013convergence,ZhuDedeQuarteroni17}, and for flow between offset cylinders and lid driven cavity flows~\cite{KeanSchneier20}.

In this paper, we use DQs with respect to time to obtain optimal pointwise in time error estimates.
A different, yet related approach was utilized in, e.g.,~\cite{carlberg2011low,ito1998reduced,zimmermann2013gradient}, where DQs with respect to system parameters and initial conditions were used to improve the predictive capabilities of reduced basis methods (RBMs)~\cite{hesthaven2015certified,quarteroni2015reduced} for parameterized problems.
In this setting, the noDQ case is referred to as the Lagrange approach, whereas the DQ case is referred to as the Hermite approach~\cite{ito1998reduced}.
The error sensitivity with respect to parameters was investigated in, e.g.,~\cite{homescu2005error,rathinam2003new}.

\smallskip

The rest of the paper is organized as follows.
In Section~\ref{sec:POD}, we describe the POD construction in the noDQ and DQ cases.
In Section \ref{sec:pointwise-projection-error-estimates}, we give 
more detail about the previously described POD pointwise projection error assumption,
show using examples that it can fail in the noDQ case, and prove that it is always satisfied in the DQ case.  These results allow us to complete the POD ROM error analysis in Section \ref{sec:ErrEst}.
For the first two main contributions, in Section~\ref{sec:numerical-results} we illustrate numerically the theoretical results.
Specifically, for the heat equation~\eqref{eqn:heat} and both analytical examples, we show the following:
(i) in the noDQ case, the error scales as in~\eqref{eqn:introduction-11} (i.e., is suboptimal), and
(ii) in the DQ case, the error scales according to the new error bounds.
Finally, in Section~\ref{sec:conclusions}, we present our conclusions and future research directions.

\section{Proper Orthogonal Decomposition (POD)}
\label{sec:POD}
In this section we introduce two different approaches for constructing our reduced basis  by using the {\it proper orthogonal decomposition (POD)}~\cite{HLB96,volkwein2013proper}.  
Suppose we have a collection of snapshots $ U = \{ u^n \}_{n=0}^N $ contained in a real Hilbert space $ \mathcal{H} $.  We consider the situation where each snapshot $ u^n $ is equal to $ u(t_n) $, where $ u \in C([0,T];\mathcal{H}) $ and $ t_n = n \Delta t $ for $ n = 0, \ldots, N $ so that $ t_0 = 0 $, $ t_N = T $, and $ \Delta t = T/N $.  We assume $ T > 0 $ is a fixed final time; however, $ \Delta t $ and $ N $ are allowed to vary.

\subsection{POD Without Difference Quotients (noDQ Case)}
\label{subsec:POD_no_DQs}

We begin by examining the POD problem without difference quotients. 
In what follows, we denote this case the {\it noDQ case}. 
Given a fixed $ r > 0 $, the problem is to find a set of orthonormal basis functions $ \{ \varphi_i \}_{i=1}^r \subset \mathcal{H} $, called POD modes or POD basis functions, that optimally approximate the snapshots in the sense that the following error measure is minimized:
\begin{equation}\label{eqn:POD_approx_error_def}
  E_r = \frac{1}{N+1} \sum_{n=0}^N \left\| u^n - P_r u^n \right\|_\mathcal{H}^2,
\end{equation}
 where $ P_r : \mathcal{H} \to \mathcal{H}$ is the orthogonal projection onto $ X^r = \mathrm{span}\{ \varphi_i \}_{i=1}^r $ given by
\begin{equation}\label{eqn:POD_projection_P_r}
  P_r u = \sum_{i=1}^r ( u, \varphi_i )_\mathcal{H} \varphi_i,  \quad  u \in \mathcal{H}.
\end{equation}

One way to find a solution of this problem is to solve the eigenvalue problem
\begin{equation}\label{eqn:POD_eigenvalue_problem}
  K \mathbf{z}_i = \lambda_i \mathbf{z}_i,  \quad  \mbox{for $ i = 1, \ldots, r $,}
\end{equation}
where $ K $ is the snapshot correlation matrix with entries
\begin{equation}\label{eqn:POD_corr_matrix}
  K_{mn} = \frac{1}{N+1} \, ( u^m, u^n )_\mathcal{H},  \quad  m,n = 0, \ldots, N.
\end{equation}
We order the eigenvalues $ \{ \lambda_i \} $ and corresponding orthonormal eigenvectors $ \{ \mathbf{z}_i \} $ so that $ \lambda_1 \geq \lambda_2 \geq \lambda_{N+1} \geq 0 $.  The optimizing orthonormal set $ \{ \varphi_i \}_{i=1}^r \subset \mathcal{H} $ is given by
\begin{equation}\label{eqn:POD_basis}
  \varphi_i = \lambda_i^{-1/2} (N+1)^{-1/2} \sum_{m=0}^N ( \mathbf{z}_i )^m u^m,  \quad  i = 1, \ldots, r,
\end{equation}
where $ ( \mathbf{z}_i )^m $ is the $ m $th entry of $ \mathbf{z}_i $.  Using these POD modes gives the optimal value for the approximation error:
\begin{equation}\label{eqn:POD_optimal_error}
  \frac{1}{N+1} \sum_{n=0}^N \left\| u^n - P_r u^n \right\|_\mathcal{H}^2 = \sum_{i > r} \lambda_i.
\end{equation}

We note that the scaling factor $ (N+1)^{-1} $ is important if one is interested in the solution of the optimization problem as more snapshots are collected, i.e., as $ \Delta t $ decreases or $ N $ increases.  For certain choices of the scaling factor, the error measure $ E_r $ in \eqref{eqn:POD_approx_error_def} converges to a time integral or a constant multiple of a time integral, and the POD eigenvalues and POD modes also converge; see, e.g., \cite{galan2008error,GubischVolkwein17,KV02,Singler11} for more information.

Different choices for the scaling factor in~\eqref{eqn:POD_approx_error_def} have been used in the literature.  We fix the scaling factor throughout this work to be $ (N+1)^{-1} $ for simplicity.  We note that since $ \Delta t = T/N $, we have $ (N+1)^{-1} = T_1^{-1} \Delta t $, where $ T_1 = T + \Delta t $.  Therefore, $ E_r $ in \eqref{eqn:POD_approx_error_def} is equal to the left Riemann sum approximation of the integral
  $$
    \frac{1}{T_1} \int_0^{T_1}  \| u(t) - P_r u(t) \|^2_\mathcal{H} \, dt.
  $$
We note that the results in this work will hold for other scaling factors, as long as the scaling factor in question scales like a constant multiple of $ \Delta t $.

\begin{remark}
  One can also consider variable time steps and weights in the POD problem; we only consider a constant time step and single weight $ (N+1)^{-1} $ for simplicity.  Furthermore, one can use other quadrature rules, such as the midpoint rule or trapezoid rule, to obtain appropriate weights for the POD problem.
\end{remark}

In the following result, we give POD approximation errors in different norms and using other projections onto $ X^r $.  Similar results have been proved in multiple works (see, e.g., \cite{iliescu2014are,iliescu2014variational,LockeSingler20,
ShenSinglerZhang19,singler2014new}),  
and our proof relies on techniques from these works.  We note that this result can be obtained directly from the general results in the recent reference \cite{LockeSingler20}; however, we include a proof to be complete.  In this work, a bounded linear operator $ \Pi : Z \to Z $ for a normed space $ Z $ is a projection onto $ Z^r \subset Z $ if $ \Pi^2 = \Pi $ and the range of $ \Pi $ equals $ Z^r $.  In this case, $ \Pi z = z $ for any $ z \in Z^r $.

\begin{lemma}\label{lemma:POD_approx_errors_Wnorm}
  Let $ X^r = \mathrm{span}\{ \varphi_i \}_{i=1}^r \subset \mathcal{H} $, let $ P_r : \mathcal{H} \to \mathcal{H}$ be the orthogonal projection onto $ X^r $ as defined in \eqref{eqn:POD_projection_P_r}, and let $ s $ be the number of positive POD eigenvalues for $ U = \{ u^n \}_{n=0}^N $.  If $ W $ is a real Hilbert space with $ U \subset W $ and $ R_r : W \to W $ is a bounded linear projection onto $ X^r $, then
  \begin{align}
    \frac{1}{N+1} \sum_{n=0}^N \left\| u^n - P_r u^n \right\|_W^2 &= \sum_{i = r+1}^s \lambda_i \| \varphi_i \|_W^2,\label{eqn:POD_proj_error_W_norm}\\
    \frac{1}{N+1} \sum_{n=0}^N \left\| u^n - R_r u^n \right\|_W^2 &= \sum_{i = r+1}^s \lambda_i \| \varphi_i - R_r \varphi_i \|_W^2.\label{eqn:POD_proj_error_W_norm2}    
  \end{align}
\end{lemma}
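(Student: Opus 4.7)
The plan is to derive both identities from a single key ingredient: the exact expansion of each snapshot $u^n$ in terms of the POD modes $\{\varphi_i\}_{i=1}^s$, combined with the orthonormality of the eigenvectors $\mathbf{z}_i$ of the correlation matrix $K$.

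First I would establish the snapshot expansion
\begin{equation*}
  u^n = \sum_{i=1}^s \sqrt{\lambda_i}\,\sqrt{N+1}\,(\mathbf{z}_i)^n\,\varphi_i, \qquad n = 0, \ldots, N.
\end{equation*}
This follows by computing the $\mathcal{H}$-inner product $(u^n,\varphi_j)_\mathcal{H}$ using the definition \eqref{eqn:POD_basis} of $\varphi_j$ and the eigenvalue relation $K\mathbf{z}_j = \lambda_j \mathbf{z}_j$ with correlation entries \eqref{eqn:POD_corr_matrix}: a short calculation gives $(u^n,\varphi_j)_\mathcal{H} = \sqrt{\lambda_j}\sqrt{N+1}\,(\mathbf{z}_j)^n$, and since the $\{\varphi_i\}_{i=1}^s$ form an orthonormal basis of $\mathrm{span}(U)$ in $\mathcal{H}$, the expansion follows. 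Note that each $\varphi_i \in W$ as well, since \eqref{eqn:POD_basis} expresses $\varphi_i$ as a linear combination of snapshots $u^m \in W$.

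For \eqref{eqn:POD_proj_error_W_norm}, subtracting $P_r u^n$ truncates the expansion to $i \ge r+1$, giving
\begin{equation*}
  u^n - P_r u^n = \sum_{i=r+1}^s \sqrt{\lambda_i}\,\sqrt{N+1}\,(\mathbf{z}_i)^n\,\varphi_i.
\end{equation*}
Squaring the $W$-norm, summing over $n$, and interchanging the sums, the inner sum $\sum_{n=0}^N (\mathbf{z}_i)^n (\mathbf{z}_j)^n$ collapses to $\delta_{ij}$ by orthonormality of the $\mathbf{z}_i$ in $\mathbb{R}^{N+1}$. Dividing by $N+1$ yields the stated identity.

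For \eqref{eqn:POD_proj_error_W_norm2}, I would use that $R_r$ is a projection onto $X^r$, so $R_r \varphi_i = \varphi_i$ for $i = 1,\ldots,r$, while $R_r \varphi_i$ is merely some element of $X^r$ for $i > r$. Applying $R_r$ term by term to the snapshot expansion and subtracting from $u^n$, the first $r$ terms cancel exactly, leaving
\begin{equation*}
  u^n - R_r u^n = \sum_{i=r+1}^s \sqrt{\lambda_i}\,\sqrt{N+1}\,(\mathbf{z}_i)^n\,(\varphi_i - R_r \varphi_i).
\end{equation*}
Repeating the $W$-norm, summation, and $\mathbf{z}_i$-orthonormality argument from the previous paragraph produces the claimed identity. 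The only step that requires any care is the indexing in the snapshot expansion and the justification that the basis truly represents every snapshot; this is essentially bookkeeping and is the closest thing to a real obstacle. No additional hypotheses on $W$ or $R_r$ beyond those stated are needed.
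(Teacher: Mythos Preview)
Your proposal is correct and follows essentially the same approach as the paper: both arguments rest on the identity $(u^n,\varphi_i)_\mathcal{H}=(N+1)^{1/2}\lambda_i^{1/2}(\mathbf{z}_i)^n$, expand $u^n-R_r u^n$ in the POD modes, and collapse the resulting double sum via the $\mathbb{R}^{N+1}$-orthonormality of the eigenvectors $\mathbf{z}_i$. The only cosmetic differences are that the paper observes \eqref{eqn:POD_proj_error_W_norm} is a special case of \eqref{eqn:POD_proj_error_W_norm2} (since $P_r\varphi_i=0$ for $i>r$) rather than proving both separately, and it disposes of the trivial case $r\ge s$ explicitly.
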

\begin{proof}
  First, we note that \eqref{eqn:POD_proj_error_W_norm} is a special case of \eqref{eqn:POD_proj_error_W_norm2} since $ P_r \varphi_i = 0 $ for $ i > r $.  Therefore, we only prove \eqref{eqn:POD_proj_error_W_norm2}.
  
  Next, by the POD approximation error formula \eqref{eqn:POD_optimal_error}, we have $ u^n = P_s u^n $ for each $ n $.  If $ r \geq s $, since $ R_r $ is a projection onto $ X^r $ we have $ R_r u^n = R_r P_s u^n = P_s u^n = u^n $ and this proves the result.  Therefore, assume $ r < s $.  Note by the definition of $ \varphi_i $ in \eqref{eqn:POD_basis}, since $ u^n \in W $ for each $ n $ we have $ \varphi_i \in W $ for $ i = 1, \ldots, r $.  Therefore, $ X^r \subset W $, and since the range of $ R_r $ equals $ X^r $ we know the $ W $ norm in \eqref{eqn:POD_proj_error_W_norm2} is well-defined.
  
  Now, using the definition of $ P_r $ in \eqref{eqn:POD_projection_P_r} gives
  %
  %
   \begin{align*}
    \frac{1}{N+1} & \sum_{n=0}^N \left\| u^n - R_r u^n \right\|_W^2  =  \frac{1}{N+1} \sum_{n=0}^N ( ( I - R_r ) P_s u^n, ( I - R_r ) P_s u^n )_W\\
      &=  \frac{1}{N+1} \sum_{n=0}^N \sum_{i,j=1}^s ( u^n, \varphi_j)_\mathcal{H} ( u^n, \varphi_i)_\mathcal{H} ( ( I - R_r ) \varphi_j, ( I - R_r ) \varphi_i )_W,
  \end{align*}
  where $ I $ is the identity operator.  Next, take the $ \mathcal{H} $ inner product of \eqref{eqn:POD_basis} with $ u^n $ and use the eigenvalue equations \eqref{eqn:POD_eigenvalue_problem}-\eqref{eqn:POD_corr_matrix} to get
  $$
    ( u^n, \varphi_i )_\mathcal{H} = (N+1)^{1/2} \lambda_i^{1/2} ( \mathbf{z}_i )^n.
  $$
  Using this and also that $ \{ \mathbf{z}_i \} $ is orthonormal so that $ \sum_{n=0}^N ( \mathbf{z}_j )^n ( \mathbf{z}_i )^n = \delta_{ij} $ gives
  \begin{align*}
	\frac{1}{N+1} \sum_{n=0}^N \left\| u^n - R_r u^n \right\|_W^2  &=  \sum_{i,j=1}^s (\lambda_i \lambda_j)^{1/2} \delta_{ij} ( ( I - R_r ) \varphi_j, ( I - R_r ) \varphi_i )_W\\
	  &= \sum_{i=1}^s \lambda_i \| ( I - R_r ) \varphi_i \|_W^2.
  \end{align*}
  Since $ \varphi_i \in X^r $ for $ i = 1, \ldots, r $ and $ R_r $ is a projection onto $ X^r $, we have $ R_r \varphi_i = \varphi_i $ for $ i = 1, \ldots, r $ and this proves the result.
\end{proof}

\subsection{POD With Difference Quotients (DQ Case)}
\label{sec:POD_diffQ}

In this section we consider a POD problem for the same snapshots as those in Section~\ref{subsec:POD_no_DQs}, this time utilizing the difference quotients~\cite{KV01}: find an orthonormal set of basis functions $ \{ \varphi_i \}_{i=1}^r \subset \mathcal{H}$ minimizing the approximation error
\begin{equation}\label{eqn:POD_approx_error_def_with_DQs}
  E^\mathrm{DQ}_r = \frac{1}{2N+1} \sum_{n=0}^N \left\| u^n - P_r u^n \right\|_\mathcal{H}^2 + \frac{1}{2N+1} \sum_{n=1}^{N} \left\| \partial u^n - P_r \partial u^n \right\|_\mathcal{H}^2,
\end{equation}
where the {\it difference quotients (DQs)} $ \{ \partial u^n \}_{i=1}^{N} $ are defined by
\begin{equation}\label{eqn:DQ_def}
  \partial u^n = \frac{ u^{n} - u^{n-1} }{ \Delta t }.
\end{equation}
In what follows, we denote this case the {\it DQ case}.

The solution to this problem can be found by setting $ v^n = u^n $ for $ n = 0, \ldots, N $ and $ v^{N+n} = \partial u^n $ for $ n = 1, \ldots, N $.  This yields a new collection of snapshots $ U^\mathrm{DQ} = \{ v^n \}_{n=0}^{M} $, where $ M = 2N $.  Proceeding as outlined in Section \ref{subsec:POD_no_DQs} using the new collection $ \{ v^n \}_{n=0}^{M} $ in place of $ \{ u^n \}_{n=0}^{N} $ gives the solution of this different POD problem.  We use $ \{ \lambda_i^\mathrm{DQ} \} $ to denote the POD eigenvalues for this POD problem; we use the same notation $ \{ \varphi_i \}_{i=1}^r $ for the POD basis functions.  The optimal approximation error is given by
\begin{equation}\label{eqn:POD_optimal_error_with_DQs}
  \frac{1}{2N+1} \sum_{n=0}^N \left\| u^n - P_r u^n \right\|_\mathcal{H}^2 + \frac{1}{2N+1} \sum_{n=1}^{N} \left\| \partial u^n - P_r \partial u^n \right\|_\mathcal{H}^2 = \sum_{i > r} \lambda_i^\mathrm{DQ}.
\end{equation}

Again, the choice of the scaling factor in the approximation error \eqref{eqn:POD_approx_error_def_with_DQs} is important if we consider the case where the amount of data increases, i.e., $ \Delta t $ decreases and $ N $ increases.  The DQs are used to approximate the time derivative of the data; therefore, for an appropriate choice of the scaling factor the approximation error in \eqref{eqn:POD_approx_error_def_with_DQs} contains approximations of time integrals involving both the data $ u(t) $ and also the time derivative of the data $ \partial_t u(t) $.  For the DQ case, we use $ (2N+1)^{-1} $ for the scaling factor throughout for simplicity.

As before, we give POD approximation errors in different norms and using other projections onto $ X^r $.
\begin{lemma}\label{lemma:POD_approx_errors_Wnorm_withDQs}
	Let $ X^r = \mathrm{span}\{ \varphi_i \}_{i=1}^r \subset \mathcal{H} $, let $ P_r : \mathcal{H} \to \mathcal{H}$ be the orthogonal projection onto $ X^r $ as defined in \eqref{eqn:POD_projection_P_r}, and let $ s $ be the number of positive POD eigenvalues for the collection $ U^\mathrm{DQ} = \{ v^n \}_{n=0}^{2N} $ described above.  If $ W $ is a real Hilbert space with $ U^\mathrm{DQ} \subset W $ and $ R_r : W \to W $ is a bounded linear projection onto $ X^r $, then
	%
	%
	%
	\begin{align}
	\frac{1}{2N+1} \Bigg( \sum_{n=0}^N \left\| u^n - P_r u^n \right\|_W^2 +  \sum_{n=1}^{N} \left\| \partial u^n - P_r \partial u^n \right\|_W^2 \Bigg) &= \sum_{i = r+1}^s \lambda_i^\mathrm{DQ} \| \varphi_i \|_W^2,\label{eqn:POD_proj_error_W_norm_withDQs}\\
	\frac{1}{2N+1} \Bigg( \sum_{n=0}^N \left\| u^n - R_r u^n \right\|_W^2 +  \sum_{n=1}^{N} \left\| \partial u^n - R_r \partial u^n \right\|_W^2 \Bigg) &= \sum_{i = r+1}^s \lambda_i^\mathrm{DQ} \| \varphi_i - R_r \varphi_i \|_W^2.    \label{eqn:POD_proj_error_W_norm2_withDQs}
	\end{align}
\end{lemma}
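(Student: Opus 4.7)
The plan is to reduce this statement to the already proved Lemma~\ref{lemma:POD_approx_errors_Wnorm} by recognizing that the DQ POD problem is nothing other than the standard (noDQ) POD problem applied to the augmented snapshot collection $ U^\mathrm{DQ} = \{ v^n \}_{n=0}^{M} $ with $ M = 2N $, where $ v^n = u^n $ for $ 0 \le n \le N $ and $ v^{N+n} = \partial u^n $ for $ 1 \le n \le N $. This reformulation is exactly the one already used in Section~\ref{sec:POD_diffQ} to describe how the DQ POD basis is computed, so no new construction is required.

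First, I would note that \eqref{eqn:POD_proj_error_W_norm_withDQs} is the special case of \eqref{eqn:POD_proj_error_W_norm2_withDQs} obtained by choosing $ R_r = P_r $, since $ P_r \varphi_i = 0 $ for $ i > r $ (exactly as in the proof of Lemma~\ref{lemma:POD_approx_errors_Wnorm}). Thus it suffices to prove \eqref{eqn:POD_proj_error_W_norm2_withDQs}.

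Next, I would apply Lemma~\ref{lemma:POD_approx_errors_Wnorm} to the collection $ U^\mathrm{DQ} $ in place of $ U $, with $ M = 2N $ playing the role of $ N $. By construction of the DQ POD problem, the resulting eigenvalues are $ \{ \lambda_i^\mathrm{DQ} \} $, the POD basis is $ \{ \varphi_i \}_{i=1}^r $, the reduced space is $ X^r $, and the orthogonal projection is $ P_r $. The hypothesis $ U^\mathrm{DQ} \subset W $ supplies the inclusion needed to invoke Lemma~\ref{lemma:POD_approx_errors_Wnorm} in the $W$ norm, and its conclusion gives
\[
\frac{1}{M+1} \sum_{n=0}^{M} \left\| v^n - R_r v^n \right\|_W^2 = \sum_{i=r+1}^{s} \lambda_i^\mathrm{DQ} \| \varphi_i - R_r \varphi_i \|_W^2.
\]
Splitting the left-hand sum according to the definition of $ v^n $ produces exactly the two sums on the left of \eqref{eqn:POD_proj_error_W_norm2_withDQs}, with scaling factor $ (M+1)^{-1} = (2N+1)^{-1} $, which matches the scaling prescribed in Section~\ref{sec:POD_diffQ}.

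There is no real obstacle here: the argument is a direct application of Lemma~\ref{lemma:POD_approx_errors_Wnorm} after the relabeling $N \mapsto 2N$ and the identification of the augmented snapshot set. The only points requiring care are bookkeeping of the scaling factor (ensuring $ (M+1)^{-1} $ is exactly the $ (2N+1)^{-1} $ used in \eqref{eqn:POD_approx_error_def_with_DQs} and \eqref{eqn:POD_optimal_error_with_DQs}) and verifying that the hypothesis $ U^\mathrm{DQ} \subset W $ covers both the snapshots $ u^n $ and the difference quotients $ \partial u^n $, which is immediate.
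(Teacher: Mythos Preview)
Your proposal is correct and is exactly the approach the paper takes: the paper's proof is the single sentence ``Apply Lemma~\ref{lemma:POD_approx_errors_Wnorm} to the new collection of snapshots $\{v^n\}_{n=0}^{M}$ described above.'' Your write-up simply spells out the bookkeeping (the relabeling $N\mapsto M=2N$, the scaling factor, and the reduction of \eqref{eqn:POD_proj_error_W_norm_withDQs} to \eqref{eqn:POD_proj_error_W_norm2_withDQs}) that the paper leaves implicit.
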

\begin{proof}
	Apply Lemma \ref{lemma:POD_approx_errors_Wnorm} to the new collection of snapshots $ \{ v^n \}_{n=0}^{M} $ described above.
\end{proof}

\begin{remark}
In this section, we considered the DQs defined by \eqref{eqn:DQ_def}. In practice the definition of the DQs will reflect the time discretization used to collect the snapshot data. 
For example, POD with 
central difference quotients is used for wave equations in \cite{herkt2013convergence,ZhuDedeQuarteroni17} and
fractional difference quotients are used for a subdiffusion problem in \cite{JinZhou17}.
It is possible that the results of this paper can be extended to these and other definitions of the DQs, such as those arising from the backward differentiation formulas (BDF2, BDF3, etc.).  We leave this to be considered elsewhere.
\end{remark}




\section{Pointwise Projection Error Estimates}
    \label{sec:pointwise-projection-error-estimates}


In the current literature on pointwise error bounds for the POD of parabolic problems 
several researchers make an assumption concerning the pointwise in time behavior of the POD projection errors~\cite{decaria2020artificial,eroglu2017modular,ErogluKayaRebholz19,gunzburger2017ensemble,iliescu2014are,iliescu2014variational,KeanSchneier20,mohebujjaman2017energy,xie2018numerical,zerfas2019continuous}.
Roughly, the assumption says that the POD projection error at any time is of the same order as the total POD projection errors considered in Section \ref{sec:POD}.
Next, we formalize this assumption in Assumption~\ref{ass:LinftyTime}, and then we discuss it for the noDQ case (Section~\ref{sec:pointwise-error-noDQ}) and the DQ case (Section~\ref{sec:pointwise-error-DQ}).

\medskip

We consider the POD of a collection of snapshots $ U := \{ u^n \}_{n=0}^N \subset \mathcal{H} $ and also $ U \subset W $, as in Section \ref{sec:POD}.  Recall, $ P_r : \mathcal{H} \to \mathcal{H} $ is the orthogonal projection onto the first $ r $ POD modes.  For either the noDQ case or the DQ case, the pointwise POD projection error assumption is given as follows:
\begin{assumption}\label{ass:LinftyTime}
There exists a constant $ C $, depending on $T = N \Delta t $ only, such that the POD projection error satisfies
\begin{align}\label{eq:AssNoDQs}
\left\| u^n - P_r u^n \right\|_{W}^{2}
\leq C\sum_{i=r+1}^{s}\lambda_{i} \|\varphi_i\|_{W}^2 \quad \text{for all $ r =1, \ldots, s $ and $ n=0,\ldots,N $}.
\end{align}
\end{assumption}
In Section \ref{sec:pointwise-error-noDQ}, we construct examples that show that this assumption can be violated in the noDQ case.  In Section \ref{sec:pointwise-error-DQ}, we show in Theorem \ref{thm:uniform_estimates} that this assumption is always satisfied in the DQ case.

\begin{remark}[Avoiding Assumption~\ref{ass:LinftyTime}]
	\label{rem:LinftyTime-similar-assumptions}
We notice that Assumption~\ref{ass:LinftyTime} would follow directly from the POD approximation properties \eqref{eqn:POD_proj_error_W_norm} (in the noDQ case) and \eqref{eqn:POD_proj_error_W_norm_withDQs} (in the DQ case) if we dropped the $1/(N+1)$ and $1/(2N+1)$ factors in the definitions \eqref{eqn:POD_approx_error_def} and \eqref{eqn:POD_approx_error_def_with_DQs} of the error measures $ E_r $ and $ E_r^\mathrm{DQ} $. 
In fact, when $\mathcal{H} = W=\mathbbm{R}^m$, this approach is used in, e.g.,~\cite{kostova2018model}.
We emphasize, however, that using this approach would increase by $\Delta t^{-1}$ the magnitudes of the eigenvalues on the right-hand side of the POD approximation properties \eqref{eqn:POD_proj_error_W_norm} and \eqref{eqn:POD_proj_error_W_norm_withDQs}, which would yield suboptimal error estimates.  Similar conclusions were reached in Remark 2.3 in~\cite{iliescu2014are} for the case $W=\mathcal{H}$.
\end{remark}




\begin{remark}[Similar Assumptions]
For $W = \mathcal{H}$, Assumption~\ref{ass:LinftyTime} is Assumption 2.1 in~\cite{iliescu2014are} (in which the $L^2$ inner product should be replaced with the correct $\mathcal{H}$ inner product).
A similar assumption (but for the $L^{2}$ projection of a continuous solution on $X^r$ when $\mathcal{H}=L^2$) is made in Assumption 3.2 in~\cite{iliescu2014variational}.
No such assumption is made in~\cite{iliescu2013variational}, since Theorem 3.5 proves an estimate for the average error, not for the pointwise in time error. 
Finally, we note that Figure 4 in~\cite{iliescu2014are} provided numerical validation for Assumption~\ref{ass:LinftyTime} 
for the particular setting 
in~\cite{iliescu2014are}
when $W = \mathcal{H}$.
\end{remark}

\subsection{Pointwise Error Estimates: noDQ Case}
\label{sec:pointwise-error-noDQ}

First, we note that in general the scaling factor $ N+1 $ is the worst case scenario for the failure of Assumption~\ref{ass:LinftyTime}.  To see this, note that for any fixed $ k $ we have
\begin{align}
  \| u^k - P_r u^k \|^2_W  &=  (N+1) \frac{1}{N+1} \| u^k - P_r u^k \|^2_W\nonumber\\
    &\leq  (N+1) \left( \frac{1}{N+1} \sum_{i=0}^N \| u^i - P_r u^i \|^2_W \right)\label{eqn:noDQ_pointwise_general_inequality}\\
    &=  (N+1) \sum_{i = r+1}^s \lambda_i^{noDQ} \| \varphi_i \|_W^2\label{eqn:noDQ_pointwise_general_bound},
\end{align}
where we used Lemma \ref{lemma:POD_approx_errors_Wnorm} to obtain 
\eqref{eqn:noDQ_pointwise_general_bound}.  Note that for many collections of snapshots $ \{ u^k \}_{k=0}^N $ the inequality in \eqref{eqn:noDQ_pointwise_general_inequality} will be very conservative.  Nevertheless, we show below that the above $ N+1 $ scaling is attained for a family of examples.

Assumption~\ref{ass:LinftyTime} says that the error at any particular index is not much larger than the other pointwise errors, or equivalently the inequality \eqref{eqn:noDQ_pointwise_general_inequality} is overly conservative.  Therefore, Assumption~\ref{ass:LinftyTime} will be false if there is an index $ n $ such that the projection error at index $ n $ is much larger than the remaining pointwise errors, i.e.,
\begin{equation}\label{eqn:counterexample_motivation_ineq}
  \| u^n - P_r u^n \|_W^2  \gg  \| u^i - P_r u^i \|_W^2,  \quad  \forall i \neq n,  \quad 0 \leq i \leq N.
\end{equation}
Next, we provide a {\it family of counterexamples} to Assumption~\ref{ass:LinftyTime}, i.e., a family of exact solutions (data) that yield POD bases that satisfy condition~\eqref{eqn:counterexample_motivation_ineq}.

Let $ \{ \varphi_k \}_{k \geq 1} $ be an orthonormal set in a Hilbert space $ \mathcal{H} $, with $ \mathrm{dim}(\mathcal{H}) \geq N+1 $, and let $ \lambda_1 \geq \lambda_2 \geq \cdots > 0 $ be any sequence of positive numbers.  Suppose the data $ U = \{ u^n \}_{n=0}^N \subset \mathcal H $ is given by
\begin{equation}\label{eqn:counterexample_data}
  u^n = (N+1)^{1/2} \lambda_{n+1}^{1/2} \varphi_{n+1},  \quad  n = 0, \ldots, N.
\end{equation}
It can be checked that this data has POD eigenvalues $ \{ \lambda_k \} $ with corresponding POD modes $ \{ \varphi_k \} $.

Let $ W $ be a real Hilbert space with $ U \subset W $. In Proposition~\ref{prop:counterexample}, we show that Assumption~\ref{ass:LinftyTime} fails for the data above.  Specifically, \eqref{eqn:counterexample_exact_error2} shows that the assumption fails for the specific case of $ r = N $ at index $ N $.  Furthermore, if the values $ \{ \lambda_k \} $ decay exponentially fast as in \eqref{eqn:exp_decay_counterexample}, then \eqref{eqn:counterexample_bound_below} shows that the assumption fails for any $ r $ at index $ r $.
\begin{proposition}\label{prop:counterexample}
  Let the data $ U = \{ u^n \}_{n=0}^N \subset \mathcal H $ be given in \eqref{eqn:counterexample_data} as described above.  Then the POD pointwise projection error for $ u^N $ is given by
  \begin{equation}\label{eqn:counterexample_exact_error2}
    \left\| u^{N} - P_N u^{N} \right\|_W^2 = (N+1) \lambda_{N+1} \| \varphi_{N+1} \|_W^2.
  \end{equation}
  Also, for any fixed $ r $ if
  \begin{equation}\label{eqn:exp_decay_counterexample}
      \lambda_k = \beta \| \varphi_k \|_W^{-2} e^{-\gamma k},  \quad  k > r,
  \end{equation}
  for some positive constants $ \beta $ and $ \gamma $, then
  \begin{equation}\label{eqn:counterexample_bound_below}
      \left\| u^r - P_r u^r \right\|_W^2  \geq  \frac{\min\{ 1, \gamma \}}{2} \, (N+1) \sum_{k=r+1}^{N+1} \lambda_k \| \varphi_k \|_W^2.
  \end{equation}
\end{proposition}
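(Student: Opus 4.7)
The plan is straightforward: both parts reduce to the observation that, for this artificial data, the truncated POD projection simply annihilates the remaining snapshot, so the pointwise error equals the full norm of one scaled mode. The only real calculation is then comparing that single term with the geometric tail.

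First, I would verify that $\{\varphi_k\}$ and $\{\lambda_k\}$ really are POD modes and eigenvalues for $U$. Plugging \eqref{eqn:counterexample_data} into \eqref{eqn:POD_corr_matrix} and using orthonormality of $\{\varphi_k\}$ in $\mathcal{H}$ gives
\begin{equation*}
K_{mn} = \frac{1}{N+1} (u^m, u^n)_{\mathcal{H}} = \lambda_{m+1}^{1/2} \lambda_{n+1}^{1/2} \delta_{mn} = \lambda_{m+1} \delta_{mn},
\end{equation*}
so $K$ is diagonal with eigenvalues $\lambda_{n+1}$ and eigenvectors equal to the standard basis. Formula \eqref{eqn:POD_basis} then recovers exactly $\varphi_i$.

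Next, for \eqref{eqn:counterexample_exact_error2}: the range of $P_N$ is $\mathrm{span}\{\varphi_1,\ldots,\varphi_N\}$, and $u^N$ is a scalar multiple of $\varphi_{N+1}$, which is $\mathcal{H}$-orthogonal to that span. Hence $P_N u^N = 0$ and $\|u^N - P_N u^N\|_W = \|u^N\|_W = (N+1)^{1/2}\lambda_{N+1}^{1/2}\|\varphi_{N+1}\|_W$; squaring gives \eqref{eqn:counterexample_exact_error2}. The identical argument at index $r$ yields
\begin{equation*}
\|u^r - P_r u^r\|_W^2 = (N+1)\lambda_{r+1}\|\varphi_{r+1}\|_W^2.
\end{equation*}

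For \eqref{eqn:counterexample_bound_below}, insert the decay $\lambda_k\|\varphi_k\|_W^2 = \beta e^{-\gamma k}$ (valid for $k>r$) and sum the geometric series:
\begin{equation*}
\sum_{k=r+1}^{N+1} \lambda_k \|\varphi_k\|_W^2 = \beta e^{-\gamma(r+1)} \, \frac{1 - e^{-\gamma(N+1-r)}}{1 - e^{-\gamma}} \;\leq\; \frac{\lambda_{r+1}\|\varphi_{r+1}\|_W^2}{1 - e^{-\gamma}}.
\end{equation*}
Combining with the equality above for $\|u^r - P_r u^r\|_W^2$ reduces the claim to the elementary inequality $1 - e^{-\gamma} \geq \tfrac{1}{2}\min\{1,\gamma\}$. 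This splits into two cases: for $\gamma \geq 1$ one has $1 - e^{-\gamma} \geq 1 - e^{-1} > \tfrac12$; for $0<\gamma\leq 1$, expanding the exponential gives $1 - e^{-\gamma} \geq \gamma - \gamma^2/2 = \gamma(1-\gamma/2) \geq \gamma/2$.

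The only place that requires any care is the elementary lower bound on $1-e^{-\gamma}$, which is handled by the two-case split above; the rest of the argument is bookkeeping once one notices that $P_r$ simply kills the single-mode snapshot at index $r$.
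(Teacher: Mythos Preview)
Your argument is correct and follows the same overall strategy as the paper: both proofs observe that $P_r u^r = 0$ (since $u^r$ is a multiple of $\varphi_{r+1}$, which is $\mathcal{H}$-orthogonal to $X^r$), giving $\|u^r - P_r u^r\|_W^2 = (N+1)\lambda_{r+1}\|\varphi_{r+1}\|_W^2$, and then compare this single term with the full tail $\sum_{k=r+1}^{N+1}\lambda_k\|\varphi_k\|_W^2$ under the exponential decay assumption.

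The difference lies only in how that comparison is carried out. The paper splits the quantity $(N+1)\lambda_{r+1}\|\varphi_{r+1}\|_W^2$ into two halves: one half is kept as the $k=r+1$ term of the sum, and the other half is bounded below by the remaining terms $k=r+2,\ldots,N+1$ via an integral comparison $\gamma^{-1}e^{-\gamma(r+1)} \geq \int_{r+1}^{N+1} e^{-\gamma x}\,dx \geq \sum_{k=r+2}^{N+1} e^{-\gamma k}$. You instead sum the geometric series explicitly and reduce everything to the elementary inequality $1-e^{-\gamma}\geq \tfrac12\min\{1,\gamma\}$, which you verify by a clean two-case split. Your route is slightly more direct and avoids the integral bound; the paper's route makes the ``half for the first term, half for the tail'' structure of the constant $\min\{1,\gamma\}/2$ more transparent. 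Either way the bookkeeping is the same, and your added verification that $K$ is diagonal (hence $\{\varphi_k\},\{\lambda_k\}$ really are the POD data) is a nice touch that the paper only asserts before the proposition.
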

\begin{remark}
  Note that for the second part of the result we still assume the POD eigenvalues in \eqref{eqn:exp_decay_counterexample} are ordered so that $ \lambda_1 \geq \lambda_2 \geq \cdots > 0 $.  Depending on the values of $ \| \varphi_k \|_W $ and $ \gamma $, the POD eigenvalues in \eqref{eqn:exp_decay_counterexample} may not be ordered in this way.  In such a case, the POD eigenvalues may need to be reordered in order to obtain a similar result.  If $ W = \mathcal{H} $ or if $ \| \varphi_k \|_W $ increases slowly relative to $ e^{-\gamma k } $, then the ordering $ \lambda_1 \geq \lambda_2 \geq \cdots > 0 $ will automatically be satisfied.
\end{remark}
\begin{proof}
Note that $ P_r u^k = 0 $ when $ k \geq r $ and so
\begin{equation}\label{eqn:counterexample_exact_error1}
  \| u^k - P_r u^k \|_W^2 = (N+1) \lambda_{k+1} \| \varphi_{k+1} \|_W^2,  \quad  k \geq r.
\end{equation}
%
Thus, \eqref{eqn:counterexample_exact_error2} follows immediately from \eqref{eqn:counterexample_exact_error1} with $ k = N $.  
  
  
  Next, to prove \eqref{eqn:counterexample_bound_below}, fix $ r $ and assume \eqref{eqn:exp_decay_counterexample} holds.  Then \eqref{eqn:counterexample_exact_error1} with $k=r$ gives
  \begin{equation}\label{eqn:counterexample_exact_error3}
    \left\| u^r - P_r u^r \right\|_W^2 = (N+1) \lambda_{r+1} \| \varphi_{r+1} \|_W^2.
  \end{equation}
  We bound half of the right-hand side of~\eqref{eqn:counterexample_exact_error3} from below by a constant multiple of the remaining terms in the sum in \eqref{eqn:counterexample_bound_below}.  Note that the assumption \eqref{eqn:exp_decay_counterexample} on the value of $ \lambda_{r+1} $ gives
  \begin{equation}\label{eqn:counterexample_exact_error4}
    \frac{1}{2} \lambda_{r+1} \| \varphi_{r+1} \|_W^2  =  \frac{\beta}{2} e^{ -\gamma (r+1) }.
  \end{equation}
  Next, we note that the exponential term on the right-hand side of~\eqref{eqn:counterexample_exact_error4} satisfies the following estimate:
  \begin{equation}\label{eqn:counterexample_exact_error4b}
    \frac{1}{\gamma} e^{ -\gamma (r+1) }  \geq  \frac{1}{\gamma} \left( e^{ -\gamma (r+1) } - e^{ -\gamma(N+1)} \right) = \int_{r+1}^{N+1} e^{-\gamma x} dx  \geq  \sum_{k=r+2}^{N+1} e^{-\gamma k}.
  \end{equation}
  Using~\eqref{eqn:exp_decay_counterexample},~\eqref{eqn:counterexample_exact_error4}, and~\eqref{eqn:counterexample_exact_error4b}, we obtain
    \begin{equation}\label{eqn:counterexample_exact_error4c}
    \frac{1}{2} (N+1) \lambda_{r+1} \| \varphi_{r+1} \|_W^2  \geq  \frac{\gamma \beta}{2} (N+1) \sum_{k = r+2}^{N+1} e^{-\gamma k}  =  \frac{\gamma}{2} (N+1) \sum_{k=r+2}^{N+1} \lambda_k \| \varphi_k \|_W^2.
  \end{equation}
  Using~\eqref{eqn:counterexample_exact_error3} and \eqref{eqn:counterexample_exact_error4c}, we get
    \begin{align}\label{eqn:counterexample_exact_error4d}
    \left\| u^r - P_r u^r \right\|_W^2 
    &\geq \frac{1}{2} (N+1) \lambda_{r+1} \| \varphi_{r+1} \|_W^2
    + \frac{\gamma}{2} (N+1) \sum_{k=r+2}^{N+1} \lambda_k \| \varphi_k \|_W^2
    \nonumber \\
    &\geq \frac{\min\{ 1, \gamma \}}{2} \, (N+1) \sum_{k=r+1}^{N+1} \lambda_k \| \varphi_k \|_W^2,
  \end{align}
  which proves \eqref{eqn:counterexample_bound_below}.
%
%
\end{proof}
Proposition~\ref{prop:counterexample} yields a family of counterexamples to Assumption~\ref{ass:LinftyTime}.
Next, we consider two counterexamples that we investigate numerically in Section~
\ref{sec:numerical-results}.

\subsubsection{Counterexample 1}
    \label{sec:cex-1}

To construct the first counterexample to Assumption~\ref{ass:LinftyTime} (which we denote counterexample 1), we follow the theoretical setting in this section and construct a family of ROM basis functions that satisfy equation~\eqref{eqn:counterexample_data}.
Specifically, we consider an orthonormal set $ \{ \varphi_n \}_{n = 0}^{N} $ in $ \mathcal{H} = L^2(0,1) $ given by
\begin{eqnarray}
	\varphi_{n+1}(x) 
	:= 2^{1/2} \, \sin ((k \, t_{n} + 1) \pi \, x ) \, , 
	\label{eqn:pointwise-error-noDQ-1}
\end{eqnarray}
where $k$ is a positive integer, $x \in [0,1]$, and $t_n = n \, \Delta t$ is chosen such that $k \, t_n \in \mathbbm{N}, \, \forall \, n \in \mathbbm{N}$. 
Next, we choose the eigenvalues
\begin{eqnarray}
	\lambda_1 
	= \lambda_2 
	= \cdots 
	= \lambda_{N+1} 
	=  \displaystyle \frac{1}{2(N+1)} \, , 
	\label{eqn:pointwise-error-noDQ-2}
\end{eqnarray}
which satisfy $ \lambda_1 \geq \lambda_2 \geq \cdots \geq \lambda_{N+1} > 0 $.
Finally, choosing the analytical solution
\begin{eqnarray}
    u_{\text{counterexample 1}}(x,t) 
    = \sin ((k \, t + 1) \pi \, x ) 
    \label{eqn:cex-1} 
\end{eqnarray}
yields the data $ U = \{ u^n \}_{n=0}^N $ that satisfies equation~\eqref{eqn:counterexample_data}.
In Section~\ref{sec:numerical-results}, we investigate numerically counterexample 1 given by the analytical solution~\eqref{eqn:cex-1}. 

\begin{remark}
Equation~\eqref{eqn:counterexample_data} (see also the comment below Assumption A.1 in \cite{mohebujjaman2017energy}) shows that the ROM basis functions are scaled versions of the snapshots.
For counterexample 1, this scaling is illustrated in~\eqref{eqn:pointwise-error-noDQ-1} and \eqref{eqn:cex-1}.
\end{remark}

\subsubsection{Counterexample 2}
    \label{sec:cex-2}

To construct the second counterexample to Assumption~\ref{ass:LinftyTime} (which we denote counterexample 2), we construct a family of ROM basis functions that satisfy both  equation~\eqref{eqn:counterexample_data} and equation~\eqref{eqn:exp_decay_counterexample} in Proposition~\ref{prop:counterexample}.
Specifically, we consider the same orthonormal set $ \{ \varphi_n \}_{n = 0}^{N} $ in $\mathcal{H} = L^2(0,1) $ given in \eqref{eqn:pointwise-error-noDQ-1} above, where again $k$ is a positive integer, $x \in [0,1]$, and $t_n = n \, \Delta t$ is chosen such that $k \, t_n \in \mathbbm{N}, \, \forall \, n \in \mathbbm{N}$. 
Next, for positive constants $ \alpha $, $ \delta $, and $ \rho $, with $ \delta = \rho \Delta t $, we choose exponentially decaying eigenvalues as in \eqref{eqn:exp_decay_counterexample}:
\begin{align*}
    \lambda_{n+1} &= \beta e^{-\gamma(n+1)},\\
	\beta &= \frac{1}{4 \delta (N+1)} e^{-\alpha + \alpha \delta^{-1} \Delta t} = \frac{1}{4 \rho T_1} e^{-\alpha + \alpha \rho^{-1}},\\
	\gamma &= \alpha \delta^{-1} \Delta t = \alpha \rho^{-1},
	\label{eqn:pointwise-error-noDQ-4}
\end{align*}
where $ T_1 = T + \Delta t $.
Finally, it can be checked that choosing the analytical solution 
\begin{eqnarray}
  u_{\text{counterexample 2}}(x,t) 
  = \frac{1}{\sqrt{2 \delta} } \left( e^{-\alpha(1+ t/\delta)} \right)^{1/2} \sin( (kt+1)\pi x )
  \label{eqn:cex-2}
  \end{eqnarray}
yields the data $ U = \{ u^n \}_{n=0}^N $ that satisfies equation~\eqref{eqn:counterexample_data}, which 
shows that, in counterexample 2, the ROM basis functions are scaled versions of the snapshots.
In Section~\ref{sec:numerical-results}, we investigate numerically counterexample 2 given by the analytical solution~\eqref{eqn:cex-2}.

\subsection{POD Pointwise Error Estimates: DQ Case} 
\label{sec:pointwise-error-DQ}
We now give one of the main results of this paper.
In Theorem \ref{thm:uniform_estimates}, we show that Assumption~\ref{ass:LinftyTime} is always satisfied in the DQ case.
This will allow us to prove in Section~\ref{sec:ErrEst} optimal pointwise in time ROM error bounds in the DQ case.
In particular, Theorem \ref{thm:uniform_estimates} will show that the assumptions similar to Assumption \ref{ass:LinftyTime} that have been made in, e.g.,~\cite{iliescu2014are}, are unnecessary for obtaining optimal error bounds in the DQ case.

In continuous time, it is well-known that the magnitude of a function $ z \in H^1(0,T) $ at any point in time is bounded above by a constant multiple of the $ H^1(0,T) $ norm of $ z $.  The constant in the bound only depends on $ T $, and there is also a similar inequality that holds for functions taking values in a Banach space $ Z $ (see, e.g., \cite[Section 5.9.2, page 302, Theorem 2 (iii)]{Evans10}).  Below, we establish a discrete time analogue of this Sobolev embedding $ H^1(0,T;Z) \hookrightarrow C([0,T];Z) $, where the DQs replace the time derivative in the $ H^1(0,T;Z) $ norm.  This lemma will allow us to directly establish POD pointwise projection error bounds in Theorem~\ref{thm:uniform_estimates}, which shows that Assumption~\ref{ass:LinftyTime} is automatically satisfied in the DQ case.
%
\begin{lemma}[Discrete time Sobolev inequality]\label{lemma:discrete_time_Sobolev_embedding}
  Let $ T > 0 $, $ Z $ be a normed space, $ \{ z^n \}_{n=0}^N \subset Z $, and $ \Delta t = T/N $.  Then
  $$
    \max_{0 \leq k \leq N} \| z^k \|_Z^2  \leq  C \left(  \frac{1}{2N+1} \sum_{n=0}^N \left\| z^n \right\|_Z^2 + \frac{1}{2N+1} \sum_{n=1}^{N} \left\| \partial z^n \right\|_Z^2 \right),
  $$
  where $ C = 6 \max\{1,T^2 \} $ and $ \partial z^n = ( z^n - z^{n-1} )/\Delta t $ for $ n = 1, \ldots, N $. 
\end{lemma}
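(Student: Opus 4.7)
The proof will mimic the standard proof of the continuous embedding $H^1(0,T;Z) \hookrightarrow C([0,T];Z)$, with the discrete telescoping sum playing the role of the fundamental theorem of calculus. Fix $k \in \{0,1,\ldots,N\}$. For any other index $j$, the identity
\[
  z^k - z^j = \Delta t \sum_{n \in I(j,k)} \pm\, \partial z^n,
\]
where $I(j,k)$ denotes the integer interval between $j$ and $k$, follows by telescoping from the definition $\partial z^n = (z^n - z^{n-1})/\Delta t$. Applying the triangle inequality in $Z$ and extending the partial sum to the full range gives
\[
  \|z^k\|_Z \leq \|z^j\|_Z + \Delta t \sum_{n=1}^{N} \|\partial z^n\|_Z,
\]
valid for every $j = 0, \ldots, N$.

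Next, I would average this inequality over $j$ from $0$ to $N$, obtaining
\[
  \|z^k\|_Z \leq \frac{1}{N+1} \sum_{j=0}^{N} \|z^j\|_Z + \Delta t \sum_{n=1}^{N} \|\partial z^n\|_Z.
\]
Both sums on the right should then be passed to $\ell^2$ form by the discrete Cauchy--Schwarz inequality: the first gives
\[
  \Bigl(\tfrac{1}{N+1}\sum_{j=0}^N \|z^j\|_Z\Bigr)^{2} \leq \tfrac{1}{N+1}\sum_{j=0}^N \|z^j\|_Z^{2},
\]
while for the second, writing $\Delta t \|\partial z^n\|_Z = \sqrt{\Delta t}\cdot \sqrt{\Delta t}\,\|\partial z^n\|_Z$ and using $N \Delta t = T$ yields
\[
  \Bigl(\Delta t \sum_{n=1}^N \|\partial z^n\|_Z\Bigr)^{2} \leq T\Delta t \sum_{n=1}^N \|\partial z^n\|_Z^{2} = \frac{T^{2}}{N}\sum_{n=1}^N \|\partial z^n\|_Z^{2}.
\]

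Squaring the averaged inequality via $(a+b)^2 \leq 2a^2+2b^2$ therefore produces
\[
  \|z^k\|_Z^{2} \leq \frac{2}{N+1}\sum_{j=0}^{N}\|z^j\|_Z^{2} + \frac{2T^{2}}{N}\sum_{n=1}^{N}\|\partial z^n\|_Z^{2}.
\]
The final step is to convert the scaling factors $1/(N+1)$ and $1/N$ into the required $1/(2N+1)$. Since $\tfrac{2N+1}{N+1} \leq 2$ one has $\tfrac{2}{N+1} \leq \tfrac{4}{2N+1}$, and since $\tfrac{2N+1}{N} = 2 + \tfrac{1}{N} \leq 3$ for $N \geq 1$ one has $\tfrac{2T^{2}}{N} \leq \tfrac{6T^{2}}{2N+1}$. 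Combining these estimates and bounding both prefactors by $6\max\{1,T^{2}\}$ gives exactly the asserted constant, and taking the maximum over $k$ concludes.

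\textbf{Anticipated difficulty.} No step is technically hard; the only subtlety is arithmetic bookkeeping. The telescoping identity must be written so that the sign of $k-j$ is irrelevant (which is why I take absolute values in $Z$ and extend to the full index range $1,\ldots,N$ before averaging), and the two applications of Cauchy--Schwarz have to be calibrated so that the prefactors combine to the sharp-looking constant $6\max\{1,T^{2}\}$ rather than something larger. Getting those constants exactly right is the main, and essentially only, obstacle.
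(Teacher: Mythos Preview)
Your proposal is correct and essentially matches the paper's proof. The only cosmetic difference is that the paper first fixes $\ell$ to be the index minimizing $\|z^\ell\|_Z$ and then bounds that minimum by the average $\frac{1}{N+1}\sum_{n=0}^N\|z^n\|_Z$, whereas you go directly to the average by summing the inequality $\|z^k\|_Z\le\|z^j\|_Z+\Delta t\sum_n\|\partial z^n\|_Z$ over $j$; the remaining Cauchy--Schwarz and constant-tracking steps are identical.
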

\begin{proof}
  For each $ k, \ell $ with $ N \geq k > \ell \geq 0 $, we have $ z^k - z^\ell = \Delta t \sum_{n=\ell+1}^k \partial z^n $.  This gives
  \begin{equation}\label{eqn:initial pointwise_bound}
    \| z^k \|_Z  \leq  \| z^\ell \|_Z + \sum_{n=1}^N  \Delta t^{1/2}  ( \Delta t^{1/2} \| \partial z^n \|_Z )  \leq  \| z^\ell \|_Z + T^{1/2} \left( \sum_{n=1}^N \Delta t \| \partial z^n \|_Z^2 \right)^{1/2},
  \end{equation}
  where we used $ \sum_{n=1}^N \Delta t = N \Delta t = T $.  This inequality is also clearly true for $ k = \ell $, and a similar argument shows that this inequality also holds for $ 0 \leq k < \ell \leq N $.
  
  Now we choose $ \ell $ so that
  \begin{equation}\label{eqn:minimizing_index}
    \| z^\ell \|_Z = \min_{0 \leq n \leq N} \| z^n \|_Z.
  \end{equation}
  We know such an $ \ell $ must exist since $ N $ is finite.  Then
  \begin{align*}
      \| z^\ell \|_Z &= \frac{1}{N+1}(N+1) \| z^\ell \|_Z = \frac{1}{N+1} \sum_{n=0}^N \| z^\ell \|_Z \\
        &\leq  \frac{1}{T} \sum_{n=0}^N \Delta t \| z^n \|_Z \leq T^{-1/2} \left( \sum_{n=0}^N \Delta t \| z^n \|_Z^2 \right)^{1/2},
  \end{align*}
  where we used \eqref{eqn:minimizing_index}, $ 1/(N+1) < 1/N = T^{-1} \Delta t $, $ \sum_{n=1}^N \Delta t = N \Delta t = T $, and the Cauchy-Schwarz inequality.
  Using this inequality with \eqref{eqn:initial pointwise_bound} yields
  \begin{equation}
    \| z^k \|_Z  \leq  T^{-1/2} \left( \sum_{n=0}^N \Delta t \| z^n \|_Z^2 \right)^{1/2} + T^{1/2} \left( \sum_{n=1}^N \Delta t \| \partial z^n \|_Z^2 \right)^{1/2}.
  \end{equation}
  Squaring both sides, and using the inequalities $ (a+b)^2 \leq 2 (a^2 + b^2) $ and $ \Delta t = (2T+\Delta t)/(2N+1) \leq 3T/(2N+1) $, we obtain the result.
\end{proof}
%

%

%
\begin{theorem}
\label{thm:uniform_estimates}
  Let $ X^r = \mathrm{span}\{ \varphi_i \}_{i=1}^r \subset \mathcal{H} $, let $ P_r : \mathcal{H} \to \mathcal{H}$ be the orthogonal projection onto $ X^r $ as defined in \eqref{eqn:POD_projection_P_r}, and let $ s $ be the number of positive POD eigenvalues for $ U^\mathrm{DQ} $.  If $ W $ is a real Hilbert space with $ U^\mathrm{DQ} \subset W $ and $ R_r : W \to W $ is a bounded linear projection onto $ X^r $, then
  \begin{subequations}\label{eqn:POD_DQs_pointwise_bound}
  \begin{align}
	\max_{0 \leq k \leq N} \left\| u^k - P_r u^k \right\|_\mathcal{H}^2  &\leq  C \sum_{i = r+1}^s \lambda_i^\mathrm{DQ},\label{eqn:POD_DQs_pointwise_bound1}\\
	\max_{0 \leq k \leq N} \left\| u^k - P_r u^k \right\|_W^2  &\leq  C \sum_{i = r+1}^s \lambda_i^\mathrm{DQ} \| \varphi_i \|_W^2,\label{eqn:POD_DQs_pointwise_bound2}\\
    \max_{0 \leq k \leq N} \left\| u^k - R_r u^k \right\|_W^2  &\leq  C \sum_{i = r+1}^s \lambda_i^\mathrm{DQ} \| \varphi_i - R_r \varphi_i \|_W^2,\label{eqn:POD_DQs_pointwise_bound3}
  \end{align}
  \end{subequations}
  where $ C = 6 \max\{1,T^2 \} $.
\end{theorem}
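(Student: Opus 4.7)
The plan is to prove all three bounds in \eqref{eqn:POD_DQs_pointwise_bound} by directly combining two results already at our disposal: the discrete time Sobolev inequality (Lemma~\ref{lemma:discrete_time_Sobolev_embedding}) and the DQ POD approximation error identities (Lemma~\ref{lemma:POD_approx_errors_Wnorm_withDQs}). The key observation that enables everything is that the projection operators are linear, so they commute with the DQ operator $\partial$: for any linear $Q$ and any $n \geq 1$,
\[
\partial (u^n - Q u^n) = \frac{(u^n - Q u^n) - (u^{n-1} - Q u^{n-1})}{\Delta t} = \partial u^n - Q \partial u^n.
\]

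I would prove \eqref{eqn:POD_DQs_pointwise_bound3} first, since \eqref{eqn:POD_DQs_pointwise_bound2} and \eqref{eqn:POD_DQs_pointwise_bound1} follow as special cases. Set $z^n = u^n - R_r u^n$ for $n = 0, \ldots, N$. Since $u^n, \partial u^n \in W$ and the range of $R_r$ lies in $X^r \subset W$, we have $z^n \in W$ and $\partial z^n = \partial u^n - R_r \partial u^n \in W$. Apply Lemma~\ref{lemma:discrete_time_Sobolev_embedding} with $Z = W$ and this sequence to obtain
\[
\max_{0 \leq k \leq N} \|u^k - R_r u^k\|_W^2 \leq C \left( \frac{1}{2N+1} \sum_{n=0}^{N} \|u^n - R_r u^n\|_W^2 + \frac{1}{2N+1} \sum_{n=1}^{N} \|\partial u^n - R_r \partial u^n\|_W^2 \right),
\]
with $C = 6\max\{1, T^2\}$. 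The right-hand side is exactly the left-hand side of \eqref{eqn:POD_proj_error_W_norm2_withDQs} multiplied by $C$, so applying that identity yields \eqref{eqn:POD_DQs_pointwise_bound3}.

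For \eqref{eqn:POD_DQs_pointwise_bound2}, I would repeat the argument with $R_r$ replaced by $P_r$ (viewing $P_r$ as a linear map from $W$ into $X^r \subset W$, which is sufficient for the computation above) and invoke \eqref{eqn:POD_proj_error_W_norm_withDQs} in place of \eqref{eqn:POD_proj_error_W_norm2_withDQs}. Finally, \eqref{eqn:POD_DQs_pointwise_bound1} is the specialization of \eqref{eqn:POD_DQs_pointwise_bound2} to $W = \mathcal{H}$, where $\|\varphi_i\|_\mathcal{H} = 1$ by orthonormality.

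The main obstacle is essentially already overcome by Lemma~\ref{lemma:discrete_time_Sobolev_embedding}, which packages the nontrivial discrete-time pointwise-to-integral estimate. Given that lemma, the proof of Theorem~\ref{thm:uniform_estimates} is a short assembly: verify the linearity/commutation of $\partial$ with $P_r$ and $R_r$, confirm the various objects live in $W$, and chain the two lemmas. No new analytical ingredient is required beyond ensuring that the $W$-valued snapshot set $U^\mathrm{DQ}$ and the projected quantities all remain in $W$, which is immediate from the hypotheses.
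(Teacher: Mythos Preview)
Your proposal is correct and follows essentially the same route as the paper: apply Lemma~\ref{lemma:discrete_time_Sobolev_embedding} to $z^n = u^n - R_r u^n$ with $Z = W$, use linearity to get $\partial z^n = \partial u^n - R_r \partial u^n$, and then invoke Lemma~\ref{lemma:POD_approx_errors_Wnorm_withDQs} to obtain \eqref{eqn:POD_DQs_pointwise_bound3}. The only cosmetic difference is that the paper deduces \eqref{eqn:POD_DQs_pointwise_bound2} from \eqref{eqn:POD_DQs_pointwise_bound3} by observing $P_r \varphi_i = 0$ for $i > r$, whereas you re-run the argument with $P_r$; both are fine.
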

\begin{proof}
  First, note that \eqref{eqn:POD_DQs_pointwise_bound1} follows from \eqref{eqn:POD_DQs_pointwise_bound2} with $ W = \mathcal{H} $ since $ \| \varphi_i \|_\mathcal{H} = 1 $ for all $ i $.  Also, \eqref{eqn:POD_DQs_pointwise_bound2} follows from \eqref{eqn:POD_DQs_pointwise_bound3} since $ P_r \varphi_i = 0 $ for $ i > r $.  Therefore, we only prove \eqref{eqn:POD_DQs_pointwise_bound3}.

  Set $ Z = W $ and $ z^n = u^n - R_r u^n $ for each $ n $.  Using Lemma \ref{lemma:discrete_time_Sobolev_embedding}, $ \partial z^n = \partial u^n - R_r \partial u^n $ for each $ n $, and 
  Lemma \ref{lemma:POD_approx_errors_Wnorm_withDQs} gives the result.
\end{proof}

\section{Pointwise Error Estimates: DQ Case
}\label{sec:ErrEst}

In this section, we prove pointwise in time error estimates for the heat equation and discuss the time and ROM discretization optimality of these estimates.  In Section \ref{subsubsec:VDQ}, we prove the pointwise in time error estimates using Crank-Nicolson time stepping in the DQ case
(see Section \ref{sec:POD_diffQ}). In Section \ref{sec:optimality}, we consider three definitions of optimality for the ROM discretization error and classify the optimality types of each pointwise error estimate in Section \ref{subsubsec:VDQ}. We show that all of the error estimates are optimal in some sense; although, in some cases we need to assume various POD projection uniform boundedness conditions are satisfied.
We also briefly discuss error estimates and optimality for the noDQ case; see Remarks \ref{remark:noDQ_error_estimates_approach},  \ref{remark:noDQ_error_estimates}, and \ref{remark:noDQ_optimality_defs}.  Below, we consider the DQ case unless explicitly mentioned otherwise.

We begin by establishing notation, definitions, and giving preliminary results that will be used in the ensuing analysis. We let $\Omega \in \mathbb{R}^{d}, d = 2,3$ be a regular open domain with Lipschitz continuous boundary $\Omega$ and denote by $(\cdot,\cdot)_{L^{2}}$ and $\|\cdot\|_{L^{2}}$ the $L^{2}$ inner product and norm respectively. We define the function space $X=H_{0}^{1}(\Omega)$ as:
\begin{equation*}
    X:=H_{0}^{1}(\Omega)^{d} = \{v \in H^{1}(\Omega)^{d} : v|_{\Gamma} = 0\}.
\end{equation*}
With the inner product $ (u,v)_{H^1_0} = (\nabla u, \nabla v)_{L^2} $, the space $ X = H^1_0(\Omega) $ is a Hilbert space.

For simplicity, we will only consider the heat equation~\eqref{eqn:heat}. We take $u(\cdot,t)\in X$, $t\in [0,T]$ to be the weak solution of the weak formulation of the heat equation with homogeneous Dirichlet boundary conditions:
\begin{align}\label{eq:heat}
(\partial_{t}u,v)_{L^2}+\nu(\nabla u,\nabla v)_{L^2}=(f,v)_{L^2}\quad \forall v\in X.
\end{align}

Replacing the unknown $u$ with $u_{r}$ in the heat equation \eqref{eq:heat}, using the Galerkin method, projecting the resulting  equations onto a space $X^{r}\subset X$, and discretizing in time using Crank-Nicolson (CN), one obtains the standard CN POD-G-ROM for the heat equation: 
\begin{align}\label{eq:POD-G-ROM}
(\partial u^{n+1}_{r},v_r)_{L^2}+\nu(\nabla u^{n+1/2}_r,\nabla v_r)_{L^2}=(f^{n+1/2},v_r)_{L^2}\quad \forall v_{r}\in X^r,
\end{align}
where $ \partial u^{n+1}_r = ( u^{n+1}_r - u^n_r )/ \Delta t $.  Also, here and below we use the notation $ z^{n+1/2} $ for any discrete or continuous time function $ z $ to denote the average
$$
  z^{n+1/2} := \frac{1}{2} \left( z^{n+1} + z^n \right).
$$
Note that, for continuous time functions, we do \textit{not} use $ z^{n+1/2} $ to denote $ z(t_n+\Delta t/2) $.

\begin{remark}
An alternative CN approach to the time discretization is to replace $ f^{n+1/2} $ in \eqref{eq:POD-G-ROM} with $ f(t_n+\Delta t/2) $. The results in this section also hold for this case.
\end{remark}


We now prove error estimates for the error $u^{n+1}-u^{n+1}_r$, where $u^{n+1}:=u(t_{n+1})$ is the solution of the weak formulation of the heat equation \eqref{eq:heat}, and $u^{n+1}_r$ is the solution of the CN POD-G-ROM \eqref{eq:POD-G-ROM}. For clarity of presentation, we only consider the error components corresponding to the POD truncation and time discretization, i.e., we ignore the spatial discretiztion (e.g., FE) error.
We start by noting that the weak solution of the heat equation evaluated at time $ t = t_n+\Delta t/2 $ satisfies:
\begin{align}\label{eq:EE}
\left(\partial u^{n+1},v_r\right)_{L^2}+\nu(\nabla u^{n+1/2},\nabla v_r)_{L^2} = (f^{n+1/2},v_{r})_{L^{2}} + \tau_n(v_{r})\quad \forall v_{r}\in X^r,
\end{align}
where $ \partial u^{n+1} = ( u^{n+1} - u^n )/ \Delta t $ and, after integrating by parts, the consistency error is given by
\begin{align}
\begin{aligned}\label{eqn:tau_CN_consistency error}
    \tau_n(v) &:= \left( \partial u^{n+1} - \partial_t u(t_n+\Delta t/2), v \right)_{L^2} + \nu \left( \Delta ( u(t_n+\Delta t/2) - u^{n+1/2} ),  v \right)_{L^2}\\
      &\quad  +  \left( f(t_n+\Delta t/2) - f^{n+1/2}, v \right)_{L^2}.
\end{aligned}
\end{align}
We assume that the solution $ u $ and the forcing $ f $ are smooth enough so that $ \tau_n(v) $ is well defined for any $ v \in X $.  We provide a more precise regularity assumption below.

The error is split into two parts:
\begin{align}\label{eq:error}
e^{n+1}=u^{n+1}-u^{n+1}_r=(u^{n+1}-w_r^{n+1})-(u^{n+1}_r-w_r^{n+1})=\eta^{n+1}-\phi^{n+1}_r,
\end{align}
where $w_r^{n+1}$ is a proper projection of $u^{n+1}$ on $X^r$, $\eta^{n+1}:=u^{n+1}-w_r^{n+1}$, and $\phi^{n+1}_r=u^{n+1}_r-w_r^{n+1}$. Subtracting \eqref{eq:POD-G-ROM} from \eqref{eq:EE} then yields:
\begin{align}\label{eq:EE1}
\begin{aligned}
(\partial \phi^{n+1}_r ,v_r)_{L^2}+\nu(\nabla \phi^{n+1/2}_r,\nabla v_r)_{L^2}&=(\partial \eta^{n+1},v_r)_{L^2}+\nu(\nabla \eta^{n+1/2},\nabla v_r)_{L^2}
\\
&\quad - \tau_n\left( v_{r} \right) \quad \forall v_{r}\in X^r.
\end{aligned}
\end{align}


The standard approach used to prove error estimates in this case is to use the Ritz projection \cite{ArXiv17,iliescu2013variational,iliescu2014variational,KV01,KV02,ESAIMProcS18}. This is also the standard approach in the FE context \cite{GiraultRaviart86,layton2008introduction,thomee2006galerkin,wheeler1973priori}.
Thus, for the ensuing analysis we choose $w_r:=R_r(u)$ in \eqref{eq:error}, where $R_r(u)$ is the Ritz projection of $u$ on $X^r$:
\begin{align}\label{eq:RitzProj}
(\nabla(u-R_r(u)),\nabla v_r)_{L^2}=0\quad \forall v_{r}\in X^r.
\end{align}
We will then denote $\eta_{Ritz}:=u-R_r(u)$.
Using the Ritz projection, \eqref{eq:EE1} then becomes:
\begin{align}\label{eq:EE2}
(\partial \phi^{n+1}_r ,v_r)_{L^2} + \nu({\nabla \phi^{n+1/2}_r } ,\nabla v_r)_{L^2} = (\partial \eta_{Ritz}^{n+1},v_r)_{L^2} - \tau_n\left( v_{r} \right) \quad \forall v_{r}\in X^r,
\end{align}
 where we have used the fact that $(\nabla \eta^{n+1/2}_{Ritz},\nabla v_r)_{L^2}=0$ by~\eqref{eq:RitzProj}.

%
\begin{remark}\label{remark:noDQ_error_estimates_approach}
  In the noDQ case (see Section \ref{subsec:POD_no_DQs}), a different approach is typically used to prove error estimates; see, e.g., \cite{chapelle2012galerkin,iliescu2014are,iliescu2014variational,singler2014new}. 
  Instead of the Ritz projection, in the noDQ case we use the $ L^2 $ projection $ \Pi_r^{L^2} $ and take $ w_r^{n+1} = \Pi_r^{L^2} u^{n+1} $.  The term $ \nu (\nabla \eta^{n+1/2}, \nabla v_r )_{L^2} $ in \eqref{eq:EE1} no longer vanishes; instead, the DQ projection error term is eliminated, i.e., $ ( \partial \eta^{n+1}, v_r )_{L^2} = 0 $ in \eqref{eq:EE1}.
  However, as explained in Remark~\ref{remark:noDQ_error_estimates}, the resulting pointwise error estimates are suboptimal.
\end{remark}

For the POD basis construction, we must specify a Hilbert space $ \mathcal{H} $.  For this problem, two natural Hilbert spaces that are often used are $ \mathcal{H} = L^2(\Omega) $ or $ \mathcal{H} = X = H^1_0(\Omega) $.  Let $ X^r $ be the span of the first $ r $ POD modes for the data set containing the snapshots $ \{ u^n \}_{n=0}^N $ and the snapshot DQs $ \{ \partial u^n \}_{n=1}^N $.  We can use Lemma \ref{lemma:POD_approx_errors_Wnorm_withDQs} and Theorem \ref{thm:uniform_estimates} to obtain POD approximation error results with either $ W = L^2(\Omega) $ or $ W = H^1_0(\Omega) $.  We note that in the case $ \mathcal{H} = H^1_0(\Omega) $, the standard orthogonal POD projection $ P_r $ is exactly equal to the Ritz projection $ R_r $.

\subsection{Error estimates}\label{subsubsec:VDQ}
We give multiple error bounds for the solution when both the $L^{2}$ and $H^{1}_0$ POD bases are used.  Specifically, we first provide a pointwise in time error bound for the $ L^2 $ norm of the solution, and an error bound for the solution norm (a discrete time analogue of the $ L^2(0,T;H^1_0(\Omega)) $ norm) that includes the $L^2 $ norm of the solution at the final time step.  Then, we prove a pointwise in time error bound for the $ H^1_0 $ norm of the solution.

We assume the solution $ u $ of the heat equation~\eqref{eqn:heat} and the forcing $ f $ satisfy the regularity condition
\begin{align}\label{eqn:regularity_condition}
  u_{ttt}, \: \Delta u_{tt}, \: f_{tt} \in L^{2}(0,T;L^{2}(\Omega)).
\end{align}

We also define the regularity constants
\begin{align}\label{eqn:regularity_constants}
\begin{aligned}
  I_n(u,f) &:= \| u_{ttt} \|^2_{L^2(t_n,t_{n+1};L^2)} + \| \Delta u_{tt} \|^2_{L^2(t_n,t_{n+1};L^2)} + \| f_{tt} \|^2_{L^2(t_n,t_{n+1};L^2)},\\
  I(u,f) &:= \| u_{ttt} \|^2_{L^2(0,T;L^2)} + \| \Delta u_{tt} \|^2_{L^2(0,T;L^2)} + \| f_{tt} \|^2_{L^2(0,T;L^2)}.
\end{aligned}
\end{align}

\begin{lemma}\label{lemma:pointwise_L2_ROM_bound}
Consider the CN POD-G-ROM scheme \eqref{eq:POD-G-ROM}. If \eqref{eqn:regularity_condition} is satisfied, then the following error bounds hold when the $L^{2}$ POD basis is used
\begin{equation}\label{eq:L2_L2_bound}
\max_{1 \leq k \leq N} \|e^{k}\|_{L^{2}}^{2} \leq C \left(\sum_{i=r+1}^{s} \lambda_{i}^{DQ}\|\varphi_{i} -R_{r}(\varphi_{i})\|^{2}_{L^{2}} + \|\phi^{0}_r\|_{L^2}^{2} + \Delta t^{4} I(u,f) \right),
\end{equation}

\begin{align}\label{eq:energy_L2_bound}
\begin{aligned}
\|e^{N}\|_{L^{2}}^{2} +  \Delta t\sum_{n=0}^{N-1}\|\nabla e^{n+1/2}\|_{L^{2}}^{2} \leq C &\biggr(\sum_{i=r+1}^{s} \lambda_{i}^{DQ} (\|\varphi_{i} -R_{r}(\varphi_{i})\|^{2}_{L^{2}}  \\
& + \|\nabla(\varphi_{i} -R_{r}(\varphi_{i}))\|^{2}_{L^{2}} )
+ \|\phi^{0}_r\|_{L^2}^{2} + \Delta t^{4} I(u,f) \biggr),
\end{aligned}
\end{align}
and the following error bounds hold when the $H^{1}_0$ POD basis is used
\begin{equation}\label{eq:L2_H1_bound}
\max_{1 \leq k \leq N} \|e^{k}\|_{L^{2}}^{2} \leq C \left(\sum_{i=r+1}^{s} \lambda_{i}^{DQ}\|\varphi_{i}\|^{2}_{L^{2}} + \|\phi^{0}_r\|_{L^2}^{2} + \Delta t^{4} I(u,f) \right),
\end{equation}
\begin{equation}\label{eq:energy_H1_bound}
\|e^{N}\|_{L^{2}}^{2} +  \Delta t \sum_{n=0}^{N-1}\|\nabla e^{n+1/2}\|_{L^{2}}^{2} \leq C \left(\sum_{i=r+1}^{s} (1+\|\varphi_{i}\|^{2}_{L^{2}})\lambda_{i}^{DQ} + \|\phi^{0}_r\|_{L^2}^{2} + \Delta t^{4} I(u,f) \right).
\end{equation}
\label{lem:l2-error-estimate}
\end{lemma}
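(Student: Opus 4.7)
The plan is to follow a standard energy argument for Crank--Nicolson time stepping, combined with the new DQ-based POD approximation results of Section~\ref{sec:pointwise-projection-error-estimates}. I would decompose the error as $e^{n+1} = \eta_{Ritz}^{n+1} - \phi_r^{n+1}$ and start from the Ritz error equation~\eqref{eq:EE2}. The critical step is to take $v_r = \phi_r^{n+1/2}$ as the test function. Using the identity $(\partial \phi_r^{n+1}, \phi_r^{n+1/2})_{L^2} = \frac{1}{2\Delta t}(\|\phi_r^{n+1}\|_{L^2}^2 - \|\phi_r^n\|_{L^2}^2)$ and multiplying by $2\Delta t$ produces
\[
\|\phi_r^{n+1}\|_{L^2}^2 - \|\phi_r^n\|_{L^2}^2 + 2\nu \Delta t \|\nabla \phi_r^{n+1/2}\|_{L^2}^2 = 2\Delta t (\partial \eta_{Ritz}^{n+1}, \phi_r^{n+1/2})_{L^2} - 2\Delta t \, \tau_n(\phi_r^{n+1/2}).
\]
I would then bound the right-hand side by Cauchy--Schwarz and Young's inequality: the DQ-projection term is estimated by $\frac{\Delta t}{\epsilon}\|\partial \eta_{Ritz}^{n+1}\|_{L^2}^2 + \epsilon \Delta t \|\phi_r^{n+1/2}\|_{L^2}^2$, while Taylor expansions around $t_n + \Delta t/2$ with integral remainders give $|\tau_n(v)|^2 \leq C \Delta t^3 I_n(u,f) \|v\|_{L^2}^2$, and an additional Young step yields $2\Delta t|\tau_n(\phi_r^{n+1/2})| \leq C \Delta t^4 I_n(u,f)/\epsilon + \epsilon \Delta t \|\phi_r^{n+1/2}\|_{L^2}^2$.

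Next I would sum over $n = 0, \ldots, M-1$ for any $1 \leq M \leq N$, use the convex-combination bound $\|\phi_r^{n+1/2}\|_{L^2}^2 \leq \tfrac{1}{2}(\|\phi_r^{n+1}\|_{L^2}^2 + \|\phi_r^n\|_{L^2}^2)$, and apply the discrete Gronwall lemma (after choosing $\epsilon$ small so that the $\|\phi_r^M\|_{L^2}^2$ term can be absorbed to the left-hand side) to obtain
\[
\|\phi_r^M\|_{L^2}^2 + \nu \Delta t \sum_{n=0}^{M-1}\|\nabla \phi_r^{n+1/2}\|_{L^2}^2 \leq C\Bigl(\|\phi_r^0\|_{L^2}^2 + \Delta t \sum_{n=1}^{N}\|\partial \eta_{Ritz}^n\|_{L^2}^2 + \Delta t^4 I(u,f)\Bigr).
\]
At this point the DQ-based POD machinery takes over. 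Since $\partial \eta_{Ritz}^n = \partial u^n - R_r \partial u^n$, Lemma~\ref{lemma:POD_approx_errors_Wnorm_withDQs} with $W = L^2$ and the identity $(2N+1)\Delta t = 2T + \Delta t \leq C$ give $\Delta t \sum_n \|\partial \eta_{Ritz}^n\|_{L^2}^2 \leq C \sum_{i>r} \lambda_i^\mathrm{DQ}\|\varphi_i - R_r \varphi_i\|_{L^2}^2$. Maximizing the left-hand side over $M$ (or taking $M = N$ for the final-time bound), using $\|e^k\|_{L^2}^2 \leq 2\|\eta_{Ritz}^k\|_{L^2}^2 + 2\|\phi_r^k\|_{L^2}^2$, and controlling $\max_k \|\eta_{Ritz}^k\|_{L^2}^2$ by Theorem~\ref{thm:uniform_estimates} with $W = L^2$, yields~\eqref{eq:L2_L2_bound}. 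For the energy bound~\eqref{eq:energy_L2_bound} I additionally invoke Lemma~\ref{lemma:POD_approx_errors_Wnorm_withDQs} with $W = H^1_0$ to bound $\Delta t \sum \|\nabla \eta_{Ritz}^{n+1/2}\|_{L^2}^2$.

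The $H^1_0$ POD basis bounds~\eqref{eq:L2_H1_bound}--\eqref{eq:energy_H1_bound} follow from the same argument, using that when $\mathcal{H} = H^1_0$ the Ritz projection coincides with the orthogonal POD projection $P_r$, so that $R_r \varphi_i = 0$ for $i > r$ and therefore $\|\varphi_i - R_r \varphi_i\|_{L^2}^2 = \|\varphi_i\|_{L^2}^2$ and $\|\nabla(\varphi_i - R_r \varphi_i)\|_{L^2}^2 = 1$. The main obstacle I anticipate is the careful bookkeeping with the CN consistency error: obtaining the $\Delta t^4 I(u,f)$ order requires the integral-form Taylor remainder together with the right pairing of $\Delta t$ factors in Young's inequality (a naive pointwise $\Delta t^2$ estimate followed by Young would instead produce the suboptimal $\Delta t^5 I$). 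Once this consistency estimate is correctly in place, the rest of the argument is routine thanks to Theorem~\ref{thm:uniform_estimates}, which removes the need for any \emph{a priori} pointwise-in-time POD projection assumptions.
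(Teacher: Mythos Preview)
Your argument is correct and follows the same overall architecture as the paper: test \eqref{eq:EE2} with $v_r=\phi_r^{n+1/2}$, estimate the right-hand side, sum in $n$, and then invoke Lemma~\ref{lemma:POD_approx_errors_Wnorm_withDQs} and Theorem~\ref{thm:uniform_estimates} to convert the $\eta_{Ritz}$ terms into POD eigenvalue sums. The handling of the consistency error and of the $H^1_0$ basis case is also as in the paper.

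The one genuine difference is in how you absorb the $\phi_r^{n+1/2}$ term produced by Young's inequality. You keep it as $\epsilon\,\Delta t\,\|\phi_r^{n+1/2}\|_{L^2}^2$, sum, and close with the discrete Gronwall lemma. The paper instead applies Poincar\'e's inequality to pass to $\|\nabla\phi_r^{n+1/2}\|_{L^2}^2$ and absorbs this directly into the diffusion term $2\nu\,\Delta t\,\|\nabla\phi_r^{n+1/2}\|_{L^2}^2$ on the left, so that after summation one obtains \eqref{eq:EE4} without any Gronwall step. Both routes are valid; the paper's choice avoids the exponential-in-$T$ constant that Gronwall introduces (at the price of a Poincar\'e constant and a $\nu^{-1}$), and it makes the passage from \eqref{eq:EE3} to \eqref{eq:EE4} a pure telescoping sum.
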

\begin{proof}
 We let $v_r:=\phi^{n+1/2}_r$ in equation \eqref{eq:EE2}, apply Cauchy-Schwarz, Young's, and Poincar{\'e}'s inequalities, and Taylor's theorem\footnote{see, e.g., \cite[Lemma 26, page 166]{layton2008introduction} or \cite[pages 16-17]{thomee2006galerkin}} to yield:
\begin{align}\label{eq:EE3}
\begin{aligned}
\|\phi^{n+1}_r\|_{L^2}^{2} - \|\phi^{n}_r\|_{L^2}^{2} + 2 \nu \Delta t \, \| \nabla \phi_r^{n+1/2} \|_{L^2}^{2} \leq \biggr( & C \Delta t \left\| \partial \eta_{Ritz}^{n+1} \right\|_{L^{2}}^{2} + C \Delta t^{4} I_n(u,f)\\
  &\quad  + \nu \Delta t \, \| \nabla \phi_r^{n+1/2} \|_{ L^2 }^2 \biggr).
\end{aligned}
\end{align}
Now, summing from $n=0$ to $k-1$ gives
\BEQ\label{eq:EE4}
\begin{aligned}
\|\phi^{k}_r\|_{L^2}^{2} + \nu \sum_{n=0}^{k-1} \Delta t \, \| \nabla \phi_r^{n+1/2} \|_{L^2}^2 \leq  C \biggr(&\sum_{n=0}^{k-1} \Delta t \left\| \partial \eta_{Ritz}^{n+1} \right\|_{L^{2}}^{2} + \Delta t^{4} I(u,f) +\|\phi^{0}_r\|_{L^2}^{2}\biggr).
\end{aligned}
\EEQ
By the triangle inequality we have $ \|e^{k}\|^{2}_{L^2} \leq 2(\|\eta_{Ritz}^{k}\|^{2}_{L^2} + \|\phi_{r}^{k}\|^{2}_{L^2})$. Applying this inequality, rearranging terms, dropping an unnecessary term, and taking a maximum among constants it then follows from \eqref{eq:EE4} that
\BEQ\label{eq:EE5}
\begin{aligned}
\|e^{k}\|_{L^{2}}^{2} &\leq C\biggr( \Delta t \sum_{n=1}^{N} \|\partial \eta_{Ritz}^{n}\|_{L^{2}}^{2} + \|\eta_{Ritz}^{k}\|_{L^2}^{2} + \|\phi^{0}_r\|_{L^2}^{2} + \Delta t^{4} I(u,f) \biggr).
\end{aligned}
\EEQ
The pointwise in time estimates \eqref{eq:L2_L2_bound} and \eqref{eq:L2_H1_bound} then follow from applying Lemma \ref{lemma:POD_approx_errors_Wnorm_withDQs}  and Theorem \ref{thm:uniform_estimates} and using $\Delta t (2N+1) = (2+1/N) T \leq 3T$.

The error bounds \eqref{eq:energy_L2_bound} and \eqref{eq:energy_H1_bound} in the solution norm follow by taking $ k = N $ in \eqref{eq:EE4} and proceeding similarly.
\end{proof}

\begin{remark}\label{remark:noDQ_error_estimates}
  We briefly provide one pointwise in time error estimate for the noDQ case with the $ L^2 $ POD basis; other pointwise estimates can be obtained using similar ideas.  In the noDQ case, to obtain a pointwise in time $ L^2 $ error estimate one can proceed in a similar fashion to the above proof using the $ L^2 $ projection instead of the Ritz projection, as discussed in Remark \ref{remark:noDQ_error_estimates_approach}. The error estimate~\eqref{eq:noDQ_error_estimates_L2basis} can be obtained using Lemma \ref{lemma:POD_approx_errors_Wnorm} with $ \mathcal{H} = L^2(\Omega) $ and $ W = H^1_0(\Omega) $, and the worst case pointwise projection error bound \eqref{eqn:noDQ_pointwise_general_bound}:
\begin{equation}
\label{eq:noDQ_error_estimates_L2basis}
\begin{aligned}
\max_{1 \leq k \leq N} \|e^{k}\|_{L^{2}}^{2}
\leq C &\biggr( (N+1) \sum_{i=r+1}^{N+1} \lambda_{i}^{noDQ} + \sum_{i=r+1}^{N+1} \lambda_i^{noDQ} \| \nabla \varphi_i \|^2_{L^2}\\
&\quad + \|\phi^{0}_r\|_{L^2}^{2} + \Delta t^{4} I(u,f) \biggr).
\end{aligned}
\end{equation}
  If Assumption~\ref{ass:LinftyTime} is satisfied, then the $ (N+1) $ scaling factor can be removed.

We emphasize that the error estimate~\eqref{eq:noDQ_error_estimates_L2basis} is suboptimal; see Remark \ref{remark:noDQ_optimality_defs} below for precise optimality definitions. First, the estimate is suboptimal with respect to the time discretization error because of the extra factor $(N+1) = (T\Delta t^{-1} + 1)$.
Second, the estimate is suboptimal with respect to the ROM projection error because of the second term on the right-hand side, which contains $\| \nabla \varphi_i \|^2_{L^2}$ instead of $\| \varphi_i \|^2_{L^2}$.
This is a consequence of using the $L^2$ projection instead of the classical Ritz projection (see Remark \ref{remark:noDQ_error_estimates_approach}).
As explained in~\cite{iliescu2014are}, using the $L^2$ projection eliminates the need to use the DQs, but yields suboptimal estimates with respect to the ROM projection error. 
Thus, even if Assumption~\ref{ass:LinftyTime} is satisfied and the $ (N+1) $ scaling factor can be removed, the error estimate~\eqref{eq:noDQ_error_estimates_L2basis} is still suboptimal.  If the $H^1_0 $ POD basis is used instead, the resulting error estimate is also suboptimal, even if Assumption~\ref{ass:LinftyTime} is satisfied; the details are similar.

\end{remark}

Next, we prove a pointwise in time error bound in the $H^{1}_0$ norm.

\begin{lemma}\label{lemma:pointwise_H1_ROM_bound}
Consider the CN POD-G-ROM scheme \eqref{eq:POD-G-ROM}. If \eqref{eqn:regularity_condition} is satisfied, then the following error bound holds when the $L^{2}$ POD basis is used
\begin{equation}\label{eq:H1_L2_bound}
\begin{aligned}
\max_{1 \leq k \leq N} \|\nabla e^{k}\|_{L^{2}}^{2} \leq C &\biggr(\sum_{i=r+1}^{s} \lambda_{i}^{DQ}\left(\|\varphi_{i} -R_{r}(\varphi_{i})\|^{2}_{L^{2}} + \|\nabla(\varphi_{i} -R_{r}(\varphi_{i}))\|^{2}_{L^{2}}\right) \\
&\quad  + \|\nabla \phi^{0}_r\|_{L^2}^{2} + \Delta t^{4} I(u,f) \biggr) \, ,
\end{aligned}
\end{equation}
and the following error bound holds when the $H^{1}_0$ POD basis is used
\begin{equation}\label{eq:H1_H1_bound}
\max_{1 \leq k \leq N} \|\nabla e^{k}\|_{L^{2}}^{2} \leq C \left(\sum_{i=r+1}^{s} \lambda_{i}^{DQ}(1 + \|\varphi_{i}\|^{2}_{L^{2}}) + \|\nabla \phi^{0}_r\|_{L^2}^{2} + \Delta t^{4} I(u,f) \right).
\end{equation}
\end{lemma}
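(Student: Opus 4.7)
The plan is to mimic the structure of Lemma \ref{lemma:pointwise_L2_ROM_bound}, but replace the test function $v_r = \phi_r^{n+1/2}$ in \eqref{eq:EE2} with the discrete time derivative $v_r = \partial \phi_r^{n+1} \in X^r$. This is the standard parabolic trick for upgrading an $L^2$ energy estimate to an $H^1_0$ pointwise estimate: testing with the time derivative turns the symmetric bilinear term into a telescoping quantity for $\|\nabla \phi_r^{n+1}\|_{L^2}^2$, while producing a nonnegative $\|\partial \phi_r^{n+1}\|_{L^2}^2$ that can absorb the right-hand side contributions through Young's inequality.

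Concretely, after substituting $v_r = \partial \phi_r^{n+1}$, I would use the Crank--Nicolson identity
\[
  2 \bigl(\nabla \phi_r^{n+1/2},\nabla \partial \phi_r^{n+1}\bigr)_{L^2}
  = \frac{1}{\Delta t}\bigl(\|\nabla \phi_r^{n+1}\|_{L^2}^2 - \|\nabla \phi_r^n\|_{L^2}^2\bigr),
\]
then multiply by $\Delta t$ and apply Cauchy--Schwarz, Young's inequality, and Taylor's theorem on the consistency error $\tau_n(\partial \phi_r^{n+1})$, exactly as in \eqref{eq:EE3}, so that a fraction of $\Delta t \|\partial \phi_r^{n+1}\|_{L^2}^2$ is absorbed to the left and the remainder is controlled by $C\,\Delta t^4 I_n(u,f) + C\,\Delta t\,\|\partial \eta_{Ritz}^{n+1}\|_{L^2}^2$. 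Summing from $n=0$ to $k-1$ telescopes the gradient term and yields
\[
  \|\nabla \phi_r^k\|_{L^2}^2 \le C\Bigl(\|\nabla \phi_r^0\|_{L^2}^2 + \sum_{n=0}^{k-1}\Delta t\,\|\partial \eta_{Ritz}^{n+1}\|_{L^2}^2 + \Delta t^4 I(u,f)\Bigr).
\]
The triangle inequality $\|\nabla e^k\|_{L^2}^2 \le 2\bigl(\|\nabla \eta_{Ritz}^k\|_{L^2}^2 + \|\nabla \phi_r^k\|_{L^2}^2\bigr)$ then reduces everything to bounding two POD projection quantities: a pointwise $H^1_0$ projection error for $\eta_{Ritz}^k$, and a time-summed $L^2$ projection error for the discrete time derivative of the Ritz projection error.

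For the pointwise term $\max_k \|\nabla \eta_{Ritz}^k\|_{L^2}^2$, I would apply Theorem \ref{thm:uniform_estimates}\eqref{eqn:POD_DQs_pointwise_bound3} with $W = H^1_0(\Omega)$ and $R_r$ equal to the Ritz projection, yielding $C\sum_{i=r+1}^s \lambda_i^{\mathrm{DQ}}\|\nabla(\varphi_i - R_r\varphi_i)\|_{L^2}^2$. For the summed DQ term, I would apply Lemma \ref{lemma:POD_approx_errors_Wnorm_withDQs} with $W = L^2(\Omega)$ to conclude
\[
  \sum_{n=1}^N \Delta t\,\|\partial \eta_{Ritz}^n\|_{L^2}^2 \le \Delta t(2N+1)\sum_{i=r+1}^s \lambda_i^{\mathrm{DQ}}\|\varphi_i - R_r\varphi_i\|_{L^2}^2 \le 3T \sum_{i=r+1}^s \lambda_i^{\mathrm{DQ}}\|\varphi_i - R_r\varphi_i\|_{L^2}^2,
\]
where the key point is that the scaling $(2N+1)^{-1}$ in the DQ norm cancels against $\Delta t$, producing a constant that depends only on $T$. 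Combining these estimates gives \eqref{eq:H1_L2_bound}. For the $H^1_0$ POD basis case \eqref{eq:H1_H1_bound}, I would use that with $\mathcal{H} = H^1_0(\Omega)$ the orthogonal POD projection $P_r$ coincides with the Ritz projection $R_r$, so $R_r\varphi_i = 0$ for $i > r$ and the POD modes satisfy $\|\nabla\varphi_i\|_{L^2} = 1$; hence $\|\varphi_i - R_r\varphi_i\|_{L^2}^2 = \|\varphi_i\|_{L^2}^2$ and $\|\nabla(\varphi_i - R_r\varphi_i)\|_{L^2}^2 = 1$, which collapses \eqref{eq:H1_L2_bound} into \eqref{eq:H1_H1_bound}.

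The only delicate point I anticipate is the consistency error estimate for $\tau_n(\partial \phi_r^{n+1})$: since the test function now has magnitude that we only control via the absorbed coercive term, one needs the Taylor remainder estimates on $\partial u^{n+1} - \partial_t u(t_n+\Delta t/2)$, $u(t_n+\Delta t/2) - u^{n+1/2}$, and $f(t_n+\Delta t/2) - f^{n+1/2}$ to balance as $C\Delta t^{3/2}\sqrt{I_n(u,f)}$ in $L^2$, so that after multiplication by $\Delta t$ and Young's inequality the $\|\partial \phi_r^{n+1}\|_{L^2}^2$ factor is absorbed and the residual is $C\Delta t^4 I_n(u,f)$. This is the same bookkeeping carried out in the proof of Lemma \ref{lemma:pointwise_L2_ROM_bound}, so no new regularity beyond \eqref{eqn:regularity_condition} is required.
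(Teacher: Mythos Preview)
Your proposal is correct and follows essentially the same approach as the paper's proof: test \eqref{eq:EE2} with $v_r=\partial\phi_r^{n+1}$, use the Crank--Nicolson telescoping identity for $\|\nabla\phi_r^{n}\|_{L^2}^2$, absorb the $\|\partial\phi_r^{n+1}\|_{L^2}^2$ contributions via Young's inequality, sum, apply the triangle inequality, and finish with Lemma~\ref{lemma:POD_approx_errors_Wnorm_withDQs} and Theorem~\ref{thm:uniform_estimates}. Your handling of the consistency term and of the $H^1_0$ basis specialization (via $P_r=R_r$, $R_r\varphi_i=0$ for $i>r$, $\|\nabla\varphi_i\|_{L^2}=1$) also matches the paper.
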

\begin{proof}
We let $v_{r}:=\partial\phi^{n+1}_r$ in \eqref{eq:EE2}: 
\begin{align}\label{eq:ApprCDQs}
\|\partial\phi^{n+1}_r\|^{2}_{L^{2}} + \frac{\nu}{2 \Delta t}( \| \nabla \phi_r^{n+1} \|_{L^2}^2 - \| \nabla \phi_r^n \|_{L^2}^2 ) = ({\partial\eta_{Ritz}^{n+1}},\partial \phi_r^{n+1})_{L^2} - \tau_{n}( \partial \phi_r^{n+1}).
\end{align}

Applying Cauchy-Schwarz and Young's inequalities along with Taylor's theorem on the RHS of \eqref{eq:ApprCDQs},  we get:
\begin{align}\label{eq:1ApprCDQs}
\begin{aligned}
\nu (\|\nabla \phi^{n+1}_{r}\|^{2}_{L^{2}} - \|\nabla \phi^{n}_{r}\|^{2}_{L^{2}} ) + 2 \Delta t \, \| \partial \phi_r^{n+1} \|_{L^2}^2 \leq &\Delta t \, \| \partial \eta_{Ritz}^{n+1} \|_{L^2}^2 + \frac{3}{2} \Delta t \, \| \partial \phi_r^{n+1} \|_{L^2}^2\\
  &\quad + C \Delta t^4 I_n(u,f).
\end{aligned}
\end{align}

Next, sum from $ n=0 $ to $ n=k-1 $ and drop an unnecessary term:
$$
  \| \nabla \phi_r^k \|_{L^2}^2  \leq  \frac{1}{\nu} \sum_{n=0}^{N-1} \Delta t \, \| \partial \eta_{Ritz}^{n+1} \|_{L^2}^2 + C \Delta t^4 I(u,f) + \| \nabla \phi_r^0 \|_{L^2}^2.
$$

Now use $ \| \nabla e^{k}\|^{2}_{L^2} \leq 2 (\| \nabla \eta_{Ritz}^{k}\|^{2}_{L^2} + \| \nabla \phi_{r}^{k}\|^{2}_{L^2})$ to obtain
$$
  \| \nabla e^k \|_{L^2}^2  \leq  C \left( \sum_{n=0}^{N-1} \Delta t \, \| \partial \eta_{Ritz}^{n+1} \|_{L^2}^2 + \| \nabla \eta_{Ritz}^{k}\|_{L^2}^{2} + \Delta t^4 I(u,f) + \| \nabla \phi_r^0 \|_{L^2}^2 \right).
$$
We use Lemma \ref{lemma:POD_approx_errors_Wnorm_withDQs}, Theorem \ref{thm:uniform_estimates}, and $\Delta t (2N+1) = (2+1/N) T \leq 3T$ to complete the proof.
\end{proof}

\subsection{Optimality of Pointwise ROM Discretization Errors}
\label{sec:optimality}

Next, we discuss three different definitions of optimality for pointwise in time ROM discretization errors.  Again, we assume we are in the DQ case throughout; although we do briefly discuss the noDQ case in Remark \ref{remark:noDQ_optimality_defs} below.  We classify the optimality type of each pointwise in time error bound for the DQ case from Section \ref{subsubsec:VDQ}. 

The optimality type of a pointwise error bound depends on both the space $ \mathcal{H} $ for the POD basis and the space $ W $ for the pointwise error norm.  In Section \ref{subsubsec:VDQ} we considered four possibilities: we used $ \mathcal{H} = L^2 $ or $ \mathcal{H} = H^1_0 $ for the POD basis, and we used $ W = L^2 $ or $ W = H^1_0 $ for the error norm.  Below, we let $ \mathcal{H} $ and $ W $ be any real Hilbert spaces, we consider the DQ case, and we let $ e^k = u^k - u^k_r $ be the ROM error for $ k = 0, \ldots, N $.  For the discretization, we assume that, if certain conditions are satisfied, then there exists a constant $ C $ so that the following pointwise error bound holds:
\begin{equation}
  \max_{1 \leq k \leq N} \| e^k \|^2_W  \leq  C \left( \Lambda_r + \Lambda^0_r + \zeta(\Delta t) + \xi(h) \right),
\end{equation}
where
\begin{itemize}
    \item $ \Lambda_r $ is the ROM discretization error, and depends only on $ r $, the POD eigenvalues, and the POD modes;
    \item $ \Lambda^0_r $ is the ROM discretization error for the initial condition only, and depends only on $ r $, the POD eigenvalues, and the POD modes;
    \item $ \zeta(\Delta t) $ is an \textit{optimal} time discretization error; and
    \item $ \xi(h) $ is an \textit{optimal} spatial discretization error.
\end{itemize}
We automatically consider the discretization error suboptimal if either the time or space discretization errors are suboptimal; therefore, we assume those errors are optimal here and focus on the ROM discretization error.

  Let $ X^r \subset \mathcal{H} $ be the span of the first $ r $ POD modes, and assume $ X^r $ is also contained in $ W $.  Let $ P_r : \mathcal{H} \to \mathcal{H} $ be the orthogonal POD projection onto $ X^r $, and let $ \Pi^W_r : W \to W $ be the $ W $-orthogonal projection onto $X^r $.  Also, let $ s $ be the number of positive POD eigenvalues.

\begin{definition}
\label{def:optimal}
  We say the ROM discretization error $ \Lambda_r $ is
  \begin{itemize}
      \item \textbf{truly optimal} if there exists a constant $ C $ such that
      \begin{equation}\label{eqn:ROMerror_truly_optimal}
          \Lambda_r  \leq  C \Lambda_r^\star,  \quad  \Lambda_r^\star := \max_{1 \leq k \leq N} \| u^k - \Pi^W_r u^k \|_W^2,
      \end{equation}
      \item \textbf{optimal-I} if there exists a constant $ C $ such that
      \begin{equation}\label{eqn:ROMerror_optimalI}
          \Lambda_r  \leq  C \Lambda_r^I,  \quad  \Lambda_r^I := \sum_{i = r+1}^s \lambda_i \| \varphi_i \|_W^2,
      \end{equation}
      \item \textbf{optimal-II} if there exists a constant $ C $ such that
      \begin{equation}\label{eqn:ROMerror_optimalII}
          \Lambda_r  \leq  C \Lambda_r^{II},  \quad  \Lambda_r^{II} := \sum_{i = r+1}^s \lambda_i \| \varphi_i - \Pi^W_r \varphi_i \|_W^2.
      \end{equation}
  \end{itemize}
  The constant $ C $ above should be independent of all discretization parameters, but may depend on the solution data and the problem data.
\end{definition}

We note that the first two notions of optimality above are generalizations of definitions discussed in \cite{iliescu2014are}, while we believe the optimal-II definition is new.  We discuss each type of optimality below.

\begin{remark}
Note that we do not consider the ROM discretization error for the initial condition, $ \Lambda^0_r $, in these optimality definitions.  These definitions can be modified to include the ROM initial condition error, if desired.
\end{remark}

\textbf{Truly optimal:}  Since $ \Pi^W_r $ is the $W $-orthogonal projection, the quantity $ \Lambda_r^\star $ defined in \eqref{eqn:ROMerror_truly_optimal} is the best possible pointwise POD data approximation error.  As discussed in \cite{iliescu2014are}, this is the most natural definition of optimality; however, it may not be straightforward to evaluate the quantity $ \Lambda_r^\star $ and compare it to the ROM discretization error bound $ \Lambda_r $.

\textbf{Optimal-I} (Optimal type I): Since it may not be easy to deal with the notion of truly optimal, Iliescu and Wang proposed the notion of Optimal-I in \cite{iliescu2014are}.  Optimal-I has the advantage of being simple to compute since $ \Lambda_r^I $ involves only the POD eigenvalues and modes.  Optimal-I is also simple to interpret since from Lemma \ref{lemma:POD_approx_errors_Wnorm_withDQs} we have
\begin{equation}
  \Lambda_r^I  =  \frac{1}{2N+1} \sum_{n=0}^N \left\| u^n - P_r u^n \right\|_W^2 + \frac{1}{2N+1} \sum_{n=1}^{N} \left\| \partial u^n - P_r \partial u^n \right\|_W^2.
  \label{eqn:optimality-1}
\end{equation}
Therefore, $ \Lambda_r^I $ is the \textit{total} POD projection error for all of the data using the POD projection $ P_r $ and the error norm $ W $.

\textbf{Optimal-II} (Optimal type II): The value of $ \Lambda_r^{II} $ is also relatively straightforward to compute, since it involves only POD eigenvalues, modes, and the projection $ \Pi_r^W $. Also, by Lemma \ref{lemma:POD_approx_errors_Wnorm_withDQs} we have
\begin{equation}
  \Lambda_r^{II}  =  \frac{1}{2N+1} \sum_{n=0}^N \left\| u^n - \Pi_r^W u^n \right\|_W^2 + \frac{1}{2N+1} \sum_{n=1}^{N} \left\| \partial u^n - \Pi_r^W \partial u^n \right\|_W^2.
  \label{eqn:optimality-2}
\end{equation}
Since $ \Pi^W_r $ is the $W $-orthogonal projection, the quantity $ \Lambda_r^{II} $ is the best possible \textit{total} POD data approximation error, and~\eqref{eqn:optimality-1}--\eqref{eqn:optimality-2} imply
$$
  \Lambda_r^{II}  \leq  \Lambda_r^I.
$$
Optimal-II has the advantage of using a best possible POD approximation error, while also being relatively simple to compute and understand.  Finally, we note that if $ W = \mathcal{H} $ then $P_r = \Pi_r^W $ and therefore Optimal-I and Optimal-II are identical; however, Optimal-I and Optimal-II may be different if $ \mathcal{H} \neq W $.

\textbf{Comparing the optimality types:}  Since we are in the DQ case, the pointwise POD projection error result Theorem \ref{thm:uniform_estimates} implies that there exists a constant $ C $ such that
$$
  \Lambda_r^\star \leq C \Lambda_r^{II}.
$$
The above definitions, observations, and inequalities give the following result comparing the optimality types.
\begin{proposition}\label{prop:compare_optimality_types}
  The following hold:
  \begin{enumerate}[label=(\roman*)]
      \item If the ROM discretization error is truly optimal, then it is Optimal-II.
      \item If the ROM discretization error is Optimal-II, then it is Optimal-I.
      \item If $ \mathcal{H} = W $, then Optimal-I and Optimal-II are identical conditions.
      \item If there exists a constant $ C $
      such that
      \begin{equation}\label{eqn:H1_POD_basis_proj_bound}
        \| \varphi_i \|_{W}  \leq  C \| \varphi_i - \Pi^{W}_r \varphi_i  \|_{W},  \quad  r+1 \leq i \leq s,
      \end{equation}
  and if the ROM discretization error is Optimal-I, then it is Optimal-II.
  \end{enumerate}
\end{proposition}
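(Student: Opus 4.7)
The plan is to prove the four claims through short chains of inequalities that link $\Lambda_r$, $\Lambda_r^\star$, $\Lambda_r^I$, and $\Lambda_r^{II}$. Throughout, the key tool is that $\Pi_r^W$ is the $W$-orthogonal projection onto $X^r \subset W$, so elementary Hilbert-space identities apply to it; no further assumptions beyond those already in force are needed, except in part (iv) where the hypothesis \eqref{eqn:H1_POD_basis_proj_bound} is used.

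For part (i), I would invoke Theorem \ref{thm:uniform_estimates} with the choice $R_r := \Pi_r^W$, which is a bounded linear projection of $W$ onto $X^r$; this yields $\Lambda_r^\star = \max_{1\le k\le N}\|u^k - \Pi_r^W u^k\|_W^2 \leq C \sum_{i=r+1}^s \lambda_i^{\mathrm{DQ}} \|\varphi_i - \Pi_r^W \varphi_i\|_W^2 = C\,\Lambda_r^{II}$, which is precisely the inequality already stated in the text preceding the proposition. Composing with the defining bound $\Lambda_r \leq C\,\Lambda_r^\star$ of truly optimal yields Optimal-II.

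For parts (ii) and (iv) the argument is a termwise comparison of the sums defining $\Lambda_r^I$ and $\Lambda_r^{II}$. For (ii), Pythagoras applied to $\Pi_r^W$ gives $\|\varphi_i\|_W^2 = \|\Pi_r^W \varphi_i\|_W^2 + \|\varphi_i - \Pi_r^W \varphi_i\|_W^2$, so $\|\varphi_i - \Pi_r^W \varphi_i\|_W \leq \|\varphi_i\|_W$; summing against the nonnegative weights $\lambda_i$ produces $\Lambda_r^{II} \leq \Lambda_r^I$, from which Optimal-II implies Optimal-I. For (iv), squaring \eqref{eqn:H1_POD_basis_proj_bound} and again summing against $\lambda_i$ gives the reverse inequality $\Lambda_r^I \leq C^2\, \Lambda_r^{II}$, so Optimal-I implies Optimal-II.

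For part (iii), when $\mathcal{H} = W$ the two orthogonal projections coincide, $P_r = \Pi_r^W$, and for $i > r$ the POD mode $\varphi_i$ is orthogonal to $X^r$, so $\Pi_r^W \varphi_i = 0$ and $\|\varphi_i - \Pi_r^W \varphi_i\|_W = \|\varphi_i\|_W$; hence $\Lambda_r^I = \Lambda_r^{II}$ and the two notions coincide. There is no genuine obstacle in any of these steps: each is either a Pythagorean identity, a pointwise bound on $\|\varphi_i - \Pi_r^W \varphi_i\|_W$ versus $\|\varphi_i\|_W$, or a direct application of Theorem \ref{thm:uniform_estimates}. The only point worth flagging is the need for $X^r \subset W$ so that $\Pi_r^W$ is well defined as a bounded projection in $W$, which is part of the standing setup for the optimality definitions.
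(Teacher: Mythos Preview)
Your proposal is correct and matches the paper's approach exactly. The paper does not give a formal proof of this proposition; it simply states that ``the above definitions, observations, and inequalities give the following result,'' where those observations are precisely the ones you spell out: $\Lambda_r^\star \leq C\Lambda_r^{II}$ from Theorem~\ref{thm:uniform_estimates}, $\Lambda_r^{II} \leq \Lambda_r^{I}$ from $W$-orthogonality of $\Pi_r^W$, and $P_r = \Pi_r^W$ when $\mathcal{H} = W$.
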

In general, we do not know if Optimal-II implies truly optimal; however, again, $ \Lambda_r^{II} $ is easier to deal with compared to $ \Lambda_r^{\star} $.  We also do not know in general if Optimal-I implies Optimal-II when $ \mathcal{H} \neq W $.  We discuss condition \eqref{eqn:H1_POD_basis_proj_bound} below.

\begin{remark}[The noDQ case]\label{remark:noDQ_optimality_defs}
  In the noDQ case, the same definitions of optimality can be used and Lemma \ref{lemma:POD_approx_errors_Wnorm} also gives interpretations of $ \Lambda_r^I $ and $ \Lambda_r^{II} $ as total POD projections errors in the $ W $ norm.  As in the DQ case, Optimal-II implies Optimal-I, the two conditions are equivalent if $ \mathcal{H} = W $, and Optimal-I with \eqref{eqn:H1_POD_basis_proj_bound} implies Optimal-II.  
  
  \medskip
  
  However, as shown in Proposition~\ref{prop:counterexample}, in general we cannot bound the pointwise POD projection error by a constant multiple of the total POD projection error, i.e., Assumption \ref{ass:LinftyTime} is not always satisfied.
  Thus, we do not know if truly optimal implies Optimal-II.  
  Furthermore, even if Assumption \ref{ass:LinftyTime} is satisfied, the $ L^2 $ pointwise error estimate~\eqref{eq:noDQ_error_estimates_L2basis} in Remark \ref{remark:noDQ_error_estimates} is not optimal in any sense, since the second term on its right-hand side contains $\| \nabla \varphi_i \|^2_{L^2}$ instead of $\| \varphi_i \|^2_{L^2}$.
\end{remark}

\textbf{Optimality of Bounds in Section \ref{subsubsec:VDQ}:}  Next, we consider the optimality type of each pointwise in time error bound for the DQ case from Section \ref{subsubsec:VDQ}.  Comparing the pointwise bounds in Lemmas \ref{lemma:pointwise_L2_ROM_bound} and \ref{lemma:pointwise_H1_ROM_bound} to the above optimality definitions gives the following result.
\begin{theorem}
\label{theorem:opt_equiv}
  For the pointwise error bounds in Lemma \ref{lemma:pointwise_L2_ROM_bound} with error norm $ W = L^2 $:
  \begin{enumerate}[label=(\roman*)]
      \item If the $ L^2 $ POD basis is used (i.e., $ \mathcal{H} = L^2 $) and there exists a constant $ C $
      such that
      \begin{equation}\label{eqn:Ritz_proj_unif_bound}
        \| \varphi_i - R_r( \varphi_i ) \|_{L^2}  \leq  C,  \quad  r+1 \leq i \leq s,
      \end{equation}
      then the ROM discretization error in \eqref{eq:L2_L2_bound} is Optimal-I (which is identical to Optimal-II).
      \item If the $ H^1_0 $ POD basis is used (i.e., $ \mathcal{H} = H^1_0 $), then the ROM discretization error in \eqref{eq:L2_H1_bound} is Optimal-I.
      \item If the $ H^1_0 $ POD basis is used (i.e., $ \mathcal{H} = H^1_0 $) and condition  \eqref{eqn:H1_POD_basis_proj_bound} is satisfied (with $ W = L^2 $), then the ROM discretization error in \eqref{eq:L2_H1_bound} is Optimal-II.
  \end{enumerate}
  For the pointwise error bounds in Lemma \ref{lemma:pointwise_H1_ROM_bound} with error norm $ W = H^1_0 $:
  \begin{enumerate}[label=(\roman*), start=4]
      \item If the $ L^2 $ POD basis is used (i.e., $ \mathcal{H} = L^2 $), then the ROM discretization error in \eqref{eq:H1_L2_bound} is Optimal-II.
      \item If the $ H^1_0 $ POD basis is used (i.e., $ \mathcal{H} = H^1_0 $), then the ROM discretization error in \eqref{eq:H1_H1_bound} is Optimal-I (which is identical to Optimal-II).
  \end{enumerate}
\end{theorem}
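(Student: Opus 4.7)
The plan is to verify each of the five optimality assertions by comparing the right-hand side of the corresponding error bound to the reference quantity $\Lambda_r^I$ or $\Lambda_r^{II}$ from Definition \ref{def:optimal}, then invoking Proposition \ref{prop:compare_optimality_types} wherever the two notions coincide. The main tools will be: the normalization $\|\varphi_i\|_\mathcal{H}=1$ of the POD modes, the identification of $\Pi_r^W$ in each of the cases $W=L^2$ and $W=H^1_0$, and Poincar\'e's inequality to absorb $L^2$ contributions into $H^1_0$ contributions.

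For the $L^2$ error-norm bounds of Lemma \ref{lemma:pointwise_L2_ROM_bound}: in case (i) we have $\mathcal{H}=W=L^2$, so $\|\varphi_i\|_{L^2}=1$ and $\Lambda_r^I = \sum_{i=r+1}^s \lambda_i^{\mathrm{DQ}}$; the uniform bound \eqref{eqn:Ritz_proj_unif_bound} then immediately yields $\sum \lambda_i^{\mathrm{DQ}}\|\varphi_i-R_r\varphi_i\|_{L^2}^2 \le C^2 \Lambda_r^I$, giving Optimal-I, which equals Optimal-II by Proposition \ref{prop:compare_optimality_types}(iii). In case (ii), with $\mathcal{H}=H^1_0$ and $W=L^2$, the right-hand side of \eqref{eq:L2_H1_bound} is by inspection equal to $\Lambda_r^I$, so Optimal-I is automatic; case (iii) then follows by combining case (ii) with Proposition \ref{prop:compare_optimality_types}(iv) under the hypothesis \eqref{eqn:H1_POD_basis_proj_bound}.

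For the $H^1_0$ error-norm bounds of Lemma \ref{lemma:pointwise_H1_ROM_bound}: the key observation is that when $W=H^1_0$, the $W$-orthogonal projection $\Pi_r^W$ coincides with the Ritz projection $R_r$, since $R_r$ is, by its defining relation \eqref{eq:RitzProj}, precisely the $H^1_0$-orthogonal projection onto $X^r$. In case (iv) we therefore have $\Lambda_r^{II} = \sum \lambda_i^{\mathrm{DQ}}\|\nabla(\varphi_i - R_r\varphi_i)\|_{L^2}^2$, and Poincar\'e's inequality dominates the $L^2$ summands in \eqref{eq:H1_L2_bound} by the corresponding $H^1_0$ ones, giving Optimal-II. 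In case (v), $\mathcal{H}=W=H^1_0$ forces the normalization $\|\nabla\varphi_i\|_{L^2}=1$ and Poincar\'e yields $\|\varphi_i\|_{L^2}\le C_P$, so the right-hand side of \eqref{eq:H1_H1_bound} is bounded by a constant multiple of $\sum_{i=r+1}^s \lambda_i^{\mathrm{DQ}}=\Lambda_r^I$; Proposition \ref{prop:compare_optimality_types}(iii) then upgrades this to Optimal-II.

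The arguments are essentially bookkeeping once Proposition \ref{prop:compare_optimality_types} is in hand, and no step presents a genuine analytic obstacle. The only subtlety worth verifying carefully is the identification $\Pi_r^{H^1_0}=R_r$ when $\mathcal{H}\ne H^1_0$: the $W$-orthogonal projection onto the finite-dimensional subspace $X^r$ is determined solely by the $W$ inner product and the subspace itself, and for $W=H^1_0$ this is by definition the Ritz projection, regardless of which inner product was used to construct the POD basis.
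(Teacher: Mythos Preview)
Your proposal is correct and follows essentially the same approach as the paper's proof: each case is handled by reading off the ROM discretization error $\Lambda_r$ from the relevant bound, applying the normalization $\|\varphi_i\|_{\mathcal H}=1$, Poincar\'e's inequality, and the identification $\Pi_r^{H^1_0}=R_r$ as appropriate, and then invoking Proposition~\ref{prop:compare_optimality_types}. The only cosmetic difference is in case~(v), where the paper first rewrites $\Lambda_r$ in the form $\sum_{i=r+1}^s \lambda_i^{\mathrm{DQ}}\bigl(\|\varphi_i - P_r\varphi_i\|_{L^2}^2 + \|\nabla(\varphi_i - P_r\varphi_i)\|_{L^2}^2\bigr)$ (using $R_r=P_r$ and $P_r\varphi_i=0$ for $i>r$) before applying Poincar\'e, whereas you apply Poincar\'e directly to the $\|\varphi_i\|_{L^2}^2$ term together with $\|\nabla\varphi_i\|_{L^2}=1$; the two arguments are equivalent.
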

\begin{proof}
Beginning with (i), the ROM discretization error from \eqref{eq:L2_L2_bound} is given by
\begin{equation}
    \Lambda_r =\sum_{i=r+1}^{s} \lambda_{i}^{DQ}\|\varphi_{i} -R_{r}(\varphi_{i})\|^{2}_{L^{2}}.
\end{equation}
By \eqref{eqn:Ritz_proj_unif_bound}, the $L^{2}$ orthonormality of the POD basis, and the definition of Optimal-I it follows that
\begin{equation}
    \Lambda_r \leq C \sum_{i=r+1}^{s}\lambda_{i}^{DQ} = C \sum_{i=r+1}^{s}\lambda_{i}^{DQ} \|\varphi_i\|^{2}_{L^{2}} = C\Lambda_{r}^{I}.
\end{equation}
From Proposition \ref{prop:compare_optimality_types} since $\mathcal{H} = W$ this is identical to Optimal-II.

For (ii) 
the ROM discretization error from \eqref{eq:L2_H1_bound} is given by
\begin{equation}
    \Lambda_r =\sum_{i=r+1}^{s} \lambda_{i}^{DQ}\|\varphi_{i}\|^{2}_{L^{2}},
\end{equation}
which is Optimal-I by definition.

Next, (iii) follows from (ii) and Proposition \ref{prop:compare_optimality_types}.  

For (iv), the ROM discretization error in \eqref{eq:H1_L2_bound} is given by
\begin{equation}
   \Lambda_r = \sum_{i=r+1}^{s} \lambda_{i}^{DQ}\left(\|\varphi_{i} -R_{r}(\varphi_{i})\|^{2}_{L^{2}} + \|\nabla(\varphi_{i} -R_{r}(\varphi_{i}))\|^{2}_{L^{2}}\right).
\end{equation}
Applying Poincar{\'e}'s inequality to $\|\varphi_{i} -R_{r}(\varphi_{i})\|^{2}_{L^{2}}$ shows that $\Lambda_r$ is Optimal-II.

Finally, to prove (v) we use the fact that $ P_r = R_r $ for $ \mathcal{H} = H^1_0 $,  Poincar{\'e}'s inequality, and the fact that $ P_r \varphi_i = 0 $ for $ i > r $ to obtain
\begin{equation*}
   \Lambda_r = C\sum_{i=r+1}^{s} \lambda_{i}^{DQ}\left(\|\varphi_{i} -P_{r}(\varphi_{i})\|^{2}_{L^{2}} + \|\nabla(\varphi_{i} -P_{r}(\varphi_{i}))\|^{2}_{L^{2}}\right) \leq C \sum_{i=r+1}^{s} \lambda_{i}^{DQ}\|\nabla\varphi_{i}\|^{2}_{L^{2}} ,
\end{equation*}
which is Optimal-I by definition. Since $W = \mathcal{H} = H^1_0 $, this is identical to Optimal-II by Proposition \ref{prop:compare_optimality_types}. 
\end{proof}


The $ W = L^2 $ and $\mathcal{H} = H^1_0 $ case suggests it may be possible for the ROM discretization error to be Optimal-I but not Optimal-II, since an additional assumption is required for Optimal-II.  However, no other case shows a substantial difference between Optimal-I and Optimal-II.  It is possible that further differences arise for other partial differential equations; we leave this to be investigated elsewhere.

We note that equations \eqref{eqn:H1_POD_basis_proj_bound} and  \eqref{eqn:Ritz_proj_unif_bound} are uniform boundedness type conditions for non-orthogonal POD projections.  
Indeed, for the case $ W = \mathcal{H} = L^2 $, the Ritz projection $ R_r : L^2 \to L^2 $ is not orthogonal (even though it is orthogonal when viewed as a mapping $ R_r : H^1_0 \to H^1_0 $).
Thus, \eqref{eqn:Ritz_proj_unif_bound} is a uniform boundedness condition for a non-orthogonal POD projection.
Furthermore, for the case $ W = L^2 $ and $ \mathcal{H} = H^1_0 $, we have $ R_r \varphi_i = 0 $ for $ i > r $, and so \eqref{eqn:H1_POD_basis_proj_bound} can be viewed as
      \begin{equation}\label{eqn:H1_POD_basis_proj_altbound}
        \| \varphi_i - R_r \varphi_i \|_{L^2}  \leq  C \| \varphi_i - \Pi^{L^2}_r \varphi_i  \|_{L^2},  \quad  r+1 \leq i \leq s.
      \end{equation}
      %
Thus, \eqref{eqn:H1_POD_basis_proj_bound} is a uniformly bounded comparison of a non-orthogonal POD projection with an orthogonal POD projection.  These type of uniform boundedness conditions have been considered in \cite{chapelle2012galerkin,iliescu2014are,KeanSchneier20,LockeSingler20,singler2014new,xie2018numerical}, but they are not well understood.  We do not consider them further here; we leave them to be more fully explored elsewhere.



\section{Numerical Results}
    \label{sec:numerical-results}

In this section, we investigate numerically 
Assumption~\ref{ass:LinftyTime}.
Specifically, we consider the following questions:
(i) Is Assumption~\ref{ass:LinftyTime} satisfied?
(ii) Is the pointwise in time projection error optimal?
(iii) Is the pointwise in time ROM error optimal?
To investigate these questions numerically, we use the two counterexamples proposed in Sections~\ref{sec:cex-1}-\ref{sec:cex-2}: counterexample 1, which was defined in~\eqref{eqn:cex-1}, and counterexample 2, which was defined in~\eqref{eqn:cex-2}.
For each counterexample, we consider both the noDQ case (i.e., when the DQs are not used to construct the ROM basis; see Section~\ref{subsec:POD_no_DQs}) and the DQ case (i.e., when the DQs are used to construct the ROM basis; see Section~\ref{sec:POD_diffQ}).

Based on the theoretical results in Sections~\ref{sec:pointwise-error-noDQ} and \ref{subsubsec:VDQ}, we expect the noDQ case to (i) violate Assumption~\ref{ass:LinftyTime} (see~\eqref{eqn:counterexample_motivation_ineq}); (ii) yield suboptimal pointwise projection errors (see~\eqref{eqn:counterexample_exact_error2} in Proposition~\ref{prop:counterexample}); and (iii) yield suboptimal pointwise ROM errors (see~\eqref{eq:noDQ_error_estimates_L2basis}).
In contrast, based on the theoretical results in Sections~\ref{sec:pointwise-error-DQ} and \ref{subsubsec:VDQ}, we expect the DQ case to (i) fulfill Assumption~\ref{ass:LinftyTime} (see Theorem  \ref{thm:uniform_estimates}); (ii) yield optimal pointwise projection errors (see Theorem~\ref{thm:uniform_estimates}); and (iii) yield optimal pointwise ROM errors (see~\ref{eq:L2_L2_bound}). 

In our numerical investigation, we use the one-dimensional heat equation \eqref{eqn:heat}, which was used in the theoretical development in Section~\ref{sec:ErrEst}.
For all the numerical experiments, we consider $\nu = 1$.
We note that the time step, $\Delta t$, plays an important role in our theoretical and numerical investigation.
Indeed, an $(N+1) = (T \Delta t^{-1} + 1)$ factor determines the suboptimality of the pointwise projection and ROM error bounds for the noDQ case (see~\eqref{eqn:counterexample_exact_error2} and~\eqref{eq:noDQ_error_estimates_L2basis}, respectively).  
Thus, in our numerical investigation it is desirable to consider as many $\Delta t$ values as possible.
We note, however, that the two counterexamples that we investigate restrict the $\Delta t$ values that we can consider.
The reason is that, while the two counterexamples yield ROM basis functions that are scaled versions of the snapshots (which is advantageous for the theoretical development), the treatment of their boundary conditions is somewhat delicate.
Indeed, both counterexamples vanish at $x=0$, but not at $x=1$.
To simplify the numerical treatment of the right boundary condition, we consider snapshots at $\Delta t$ values for which $k \, \Delta t$ is an integer.
This choice yields snapshots that vanish both at $x=0$ and at $x=1$, which allows for a straightforward ROM construction.
To summarize, in our numerical investigation we  strive to consider optimal $k$ values that are large enough to ensure a large number  of $\Delta t$ values (while satisfying the restriction $k \, \Delta t \in \mathbbm{N}$), and also low enough so that the numerical approximation is accurate.


\paragraph{\it{Snapshot Generation}}
Counterexamples 1 and 2 display a highly oscillatory behavior for the relatively large $k$ values chosen (i.e., $k=128$ and $k=100$, respectively).
Thus, to minimize the numerical error in generating the snapshots, we do not use a standard (e.g., FE) discretization.
Instead,  to construct the snapshots, we use the analytical forms of counterexamples 1 and 2 given in~\eqref{eqn:cex-1} and~\eqref{eqn:cex-2}, respectively.

\paragraph{\it{ROM Construction}}
To construct the ROM basis, we collect equally spaced snapshots on the time interval $[0,1]$ and $[0,0.2]$ for counterexamples 1 and 2, respectively.
Thus, the snapshot matrix $K$ is $(N+1)$-dimensional in the noDQ case, and $(2N+1)$-dimensional in the DQ case, as explained in Section~\ref{subsec:POD_no_DQs} and Section~\ref{sec:POD_diffQ}, respectively.
To construct 
$K$,  in~\eqref{eqn:POD_corr_matrix} we use the standard Lagrange interpolant operator with respect to the FE nodes to  interpolate the analytical solution of counterexamples 1 and 2. 
Next, we use $K$ to build the ROM basis for the noDQ and DQ cases.
We emphasize that, although $K$ has different dimensions in the noDQ and DQ cases, to ensure a fair comparison, we use the same $r$ value in all the numerical experiments. 
We construct the ROM operators by using the FE mass and stiffness matrices, which are obtained by using a linear FE spatial discretization with mesh size $ \Delta h=1/4096$. 
As ROM initial condition, we use the $L^2$  projection of the initial condition in the noDQ  case, and the Ritz projection of the initial condition in the DQ case.
We use these ROM operators to build the ROM, and run it over the time interval $[0,T]$ with the Crank-Nicolson time discretization and the timestep $\Delta t = T/N$. 

\subsection{Counterexample 1} 
    \label{sec:numerical-results-cex-1}

In this section, we consider counterexample 1, which was proposed in~\eqref{eqn:cex-1} of Section~\ref{sec:cex-1}.
In all the numerical experiments in this section, we consider $k=128$ in~\eqref{eqn:cex-1}. 
The numerical results are organized as follows:
In Section~\ref{sec:pointwise-projection-error}, for both the noDQ and the DQ cases, we investigate numerically whether (i) Assumption~\ref{ass:LinftyTime} holds; and (ii) the pointwise projection error is optimal. 
In Section~\ref{sec:pointwise-rom-error-cex-1}, for both the noDQ and the DQ cases, we investigate numerically whether the pointwise ROM errors are optimal. 

As explained in Section~\ref{sec:cex-1}, counterexample 1 was constructed to display the suboptimality of the pointwise projection and ROM bounds when $r=N$ and $t=t_N$.
Thus, in our numerical investigation we also consider $r=N$ and $t=t_N$.

\subsubsection{Pointwise Projection Error} 
    \label{sec:pointwise-projection-error}

In this section, we investigate numerically whether Assumption~\ref{ass:LinftyTime} holds.
To this end, we monitor the magnitude of the projection error~\eqref{eqn:projection-error}
\begin{align}
    \displaystyle \Big\| \eta^{proj}(.,t_{n}) \Big\|_{L^2} = \Bigg\| u(.,t_{n})- \sum_{i=1}^{N} \Big( u(.,t_{n}), \varphi_i \Big)_{L^2} \varphi_i \Bigg\|_{L^2} , 
    \qquad n = 0, \ldots, N,
    \label{eqn:pointwise-projection-error-0}
\end{align}
at all the time instances, and check whether there are large variations in its magnitude.
Furthermore, for various $\Delta t$ values, we investigate numerically whether the projection error~\eqref{eqn:pointwise-projection-error-0} at the last time step is suboptimal (i.e., it has a suboptimal $\Delta t^{-1}$ factor). 
Specifically, as shown in \eqref{eqn:noDQ_pointwise_general_bound} for counterexample 1 in the noDQ case, the projection error at the last time step satisfies 
\begin{align}
    \displaystyle 
    \Big\| \eta^{proj}(.,t_{N}) \Big\|_{L^2}^2 
    = C_{proj}^{noDQ}  
    \sum_{i = N+1}^{N+1} \lambda_i^{noDQ} \Big \| \varphi_i \Big \|_{L^2}^2 \, , 
    \label{eqn:pointwise-projection-error-1}
\end{align}
where
\begin{eqnarray}
    C_{proj}^{noDQ}
    = T \, \Delta t^{-1} + 1
    = (N+1) \, .
    \label{eqn:pointwise-projection-error-2}
\end{eqnarray}
Moreover, as shown in \eqref{eqn:POD_DQs_pointwise_bound2} for counterexample 1 in the DQ case, the projection error at the last time step satisfies 
\begin{align}
    \displaystyle 
    \Big\| \eta^{proj}(.,t_{N}) \Big\|_{L^2}^2 
    \leq C_{proj}^{DQ}  
    \sum_{i = N+1}^{N+1} \lambda_i^{DQ} \Big \| \varphi_i \Big \|_{L^2}^2 \, , 
    \label{eqn:pointwise-projection-error-3}
\end{align}
where
\begin{eqnarray}
    C_{proj}^{DQ}
    = \mathcal{O}(1) .
    \label{eqn:pointwise-projection-error-4}
\end{eqnarray}
In this section, we investigate numerically the scalings~\eqref{eqn:pointwise-projection-error-1} and~\eqref{eqn:pointwise-projection-error-3}.

\paragraph{\it{noDQ Case}}
In Table~\ref{table:noDQ-proj-error-table}, for the noDQ case, we list the pointwise projection errors \eqref{eqn:pointwise-projection-error-0} at each time step. 
These results show that the pointwise projection error at the last time step is orders of magnitude higher than the pointwise projection error at the other time steps.
Thus, we conclude that, in the noDQ case, counterexample 1 violates Assumption~\ref{ass:LinftyTime}.
\begin{table}[h!] 
	\centering
	\begin{tabular}{|c|c|| c|c|| c|c |} 
		\hline
		$n$ & $ ||\eta^{proj } (.,t_n)||_{L^2}$  & $n$ & $ ||\eta^{proj }(.,t_n) ||_{L^2}$ & $n$ & $ ||\eta^{proj } (.,t_n)||_{L^2}$  \\ 
		\hline\hline
		$  0$ & $2.79e-08$  &  $ 6 $  & $ 2.11e-08  $ & $12 $ & $0.00e+00$ \\
		\hline
		$  1$ & $2.24e-08 $  &  $ 7 $  &$ 0.00e+00 $
  & $13 $ & $ 1.49e-08$ \\
		\hline
		$  2$ & $ 2.69e-08 $  &  $ 8 $  & $1.67e-08  $ & $14 $ & $7.45e-09 $ \\
		\hline
		$  3$ & $ 7.45e-09 $  &  $ 9 $  &$ 1.05e-08  $ & $ 15$ & $1.67e-08$ \\
		\hline
		$  4$ & $ 1.49e-08  $  &  $ 10 $  &$ 2.11e-08  $ & $ 16 $ & $7.07e-01$ \\
		\hline
		$  5$ & $  1.83e-08 $  &  $ 11 $  &$ 1.05e-08  $ & $ $ & $ $ \\			
		\hline
	\end{tabular}
	\caption {
	Counterexample 1~\eqref{eqn:cex-1}, $\Delta t = 1/16$, noDQ case: 
	Pointwise projection error~\eqref{eqn:pointwise-projection-error-0} at each time step. 
	\label{table:noDQ-proj-error-table}
	} 
\end{table}


In Table~\ref{table:noDQ-proj-scaling-table}, we list the scaling factor~\eqref{eqn:pointwise-projection-error-1} for different $\Delta t$ values.
As expected from~\eqref{eqn:pointwise-projection-error-2}, these results show that the scaling factor is equal to $(N+1)$.
Thus, we conclude that, in the noDQ case, counterexample 1 yields suboptimal pointwise projection errors.

\begin{table}[h!] 
	\centering
	\begin{tabular}{|c | c| c| c| c| c| c| } 
		\hline
		$\Delta t$ &  $1/4$ & $1/8$ & $1/16$ & $1/32$ & $1/64$ & $1/128$  \\ 
		\hline\hline
	$C_{proj}^{noDQ}$ & $5.0e+00$ & $9.0e+00$ & $1.7e+01$ & $3.3e+01$ & $6.5e+01 $ & $1.3e+02$\\
		\hline																
	\end{tabular}
	\caption{
	Counterexample 1~\eqref{eqn:cex-1}, noDQ case: 
	Scaling factor~\eqref{eqn:pointwise-projection-error-1} for different time step values. 
	}  \label{table:noDQ-proj-scaling-table}
\end{table}

\paragraph{\it{DQ Case}}
In Table~\ref{table:DQ-proj-error-table}, for the DQ case, we list the pointwise projection errors \eqref{eqn:pointwise-projection-error-0} at each time step. 
These results show that, in contrast with the noDQ case, the pointwise projection error at the last time step is of the same order of magnitude as the pointwise projection error at the other time steps.
Thus, we conclude that, in the DQ case, counterexample 1 satisfies Assumption~\ref{ass:LinftyTime}.


\begin{table}[h!] 
	\centering
	\begin{tabular}{|c|c|| c|c|| c|c |} 
		\hline
		$n$ & $ ||\eta^{proj } (.,t_n)||_{L^2}$  & $n$ & $ ||\eta^{proj }(.,t_n) ||_{L^2}$ & $n$ & $ ||\eta^{proj } (.,t_n)||_{L^2}$  \\ 
		\hline\hline
		$  0$ & $1.7144e-01$  &  $ 6 $  & $1.7144e-01$ & $12 $ & $1.7146e-01$ \\
		\hline
		$  1$ & $1.7144e-01$  &  $ 7 $  &   $1.7145e-01$
  & $13 $ & $1.7146e-01$ \\
		\hline
		$  2$ & $1.7144e-01$  &  $ 8 $  & $1.7145e-01$ & $14 $ & $1.7146e-01$ \\
		\hline
		$  3$ & $1.7144e-01$  &  $ 9 $  &$1.7145e-01$ & $ 15$ & $1.7146e-01$ \\
		\hline
		$  4$ & $1.7144e-01$  &  $ 10 $  &$1.7145e-01$ & $ 16 $ & $1.7147e-01$ \\
		\hline
		$  5$ & $1.7144e-01$  &  $ 11 $  &$1.7146e-01$  & $ $ & $ $ \\			
		\hline
	\end{tabular}
	\caption {
	Counterexample 1~\eqref{eqn:cex-1}, $\Delta t = 1/16$, DQ case: 
	Pointwise projection error~\eqref{eqn:pointwise-projection-error-0} at each time step. 
    \label{table:DQ-proj-error-table}
	}  
\end{table}

\bigskip

In Table~\ref{table:DQ-proj-scaling-table}, we list the scaling factor \eqref{eqn:pointwise-projection-error-3} for different time step values. As expected from~\eqref{eqn:pointwise-projection-error-4}, these results show that the scaling factor is bounded. Thus, we conclude that, in the DQ case, counterexample 1 yields optimal pointwise projection errors.

\begin{table}[h!] 
	\centering
	\begin{tabular}{|c | c| c| c| c| c| c| c| } 
		\hline
		$\Delta t$ &  $1/4$ & $1/8$ & $1/16$ & $1/32$ & $1/64$ & $1/128$  \\ 
		\hline\hline
	$\mathcal{C}^{DQ}_{proj}$ & $1.8e+00$ & $1.9e+00$ & $1.9e+00$ & $2.0e+00$ & $2.0e+00$ & $2.0e+00$ \\
	\hline	
	\end{tabular}
	\caption {
	Counterexample 1~\eqref{eqn:cex-1}, DQ case: 
	Scaling factor~\eqref{eqn:pointwise-projection-error-3} for different time step values. 
	}  \label{table:DQ-proj-scaling-table}
\end{table} 


The numerical results in this section support the theoretical results in Section~\ref{sec:pointwise-projection-error-estimates}.
Specifically, counterexample 1 satisfies Assumption~\ref{ass:LinftyTime} in the DQ case, but not in the noDQ case.
Furthermore, the pointwise projection error at the last time step is optimal in the DQ case, and suboptimal in the noDQ case.


\subsubsection{Pointwise ROM Error}             \label{sec:pointwise-rom-error-cex-1}

In this section, we investigate 
whether the pointwise ROM error is suboptimal. 


\paragraph{\it{noDQ Case}}

In the noDQ case, we investigate numerically the error estimate proved in \eqref{eq:noDQ_error_estimates_L2basis}:
\begin{align} \label{eqn:pointwise-rom-error-cex-1-1}
\max_{1 \leq k \leq N} \|e^{k}\|_{L^{2}}^{2}
= \mathcal{O}
\left( (N+1) \, \sum_{i=N+1}^{N+1} \lambda_{i}^{noDQ} \|\varphi_{i}\|^{2}_{L^{2}} + \Delta t^4 + \sum_{i=N+1}^{N+1} \lambda_i^{noDQ} \| \nabla \varphi_i \|^2_{L^2}
\right) .
\end{align}


We note that, since the ROM initial condition is the $L^2$ projection of the initial condition, the term $\|\phi^{0}_r\|_{L^2}^{2}$ in~\eqref{eq:noDQ_error_estimates_L2basis} vanishes in~\eqref{eqn:pointwise-rom-error-cex-1-1}. 
As explained in Remark~\ref{remark:noDQ_error_estimates}, the error bound~\eqref{eqn:pointwise-rom-error-cex-1-1} is suboptimal with  respect to the time step due to the factor $(N+1) = (\Delta t^{-1} + 1)$ in the first term on the right-hand side.
To investigate numerically the suboptimality  of the error bound~\eqref{eqn:pointwise-rom-error-cex-1-1}, in Table~\ref{table:pointwise-rom-error-cex-1-1} we list the ratio
\begin{align} \label{eqn:pointwise-rom-error-cex-1-2}
    C_{rom}^{noDQ}
    =
    \left( \max_{1 \leq k \leq N} \|e^{k}\|_{L^{2}}^{2} \right) /
    & \left( (N+1) \, \sum_{i=N+1}^{N+1} \lambda_{i}^{noDQ} \|\varphi_{i}\|^{2}_{L^{2}} \right.
    \\
    & \left. \hspace*{0.6cm}
    + \, \Delta t^4 + \sum_{i=N+1}^{N+1} \lambda_i^{noDQ} \| \nabla \varphi_i \|^2_{L^2}
    \right) .
    \nonumber 
\end{align}

The results in Table~\ref{table:pointwise-rom-error-cex-1-1} show that the ratio~\eqref{eqn:pointwise-rom-error-cex-1-2} is bounded from below.
Thus, we conclude that the pointwise ROM error in the noDQ case is suboptimal.
\begin{table}[h!] 
	\centering
	\begin{tabular}{|c | c| c| c| c| c| c| } 
		\hline
		$\Delta t$ &  $1/4$ & $1/8$ & $1/16$ & $1/32$ & $1/64$ & $1/128$  \\ 
		\hline\hline
	$\mathcal{C}_{rom}^{noDQ}$ & $3.0e-04$	& $1.8e-04$ & $1.0e-04$ & $2.0e-04$ & $7.6e-04$ & $7.9e-04$  \\
	\hline	
		
	\end{tabular}
	\caption {
	Counterexample 1~\eqref{eqn:cex-1}, noDQ case: 
	Ratio~\eqref{eqn:pointwise-rom-error-cex-1-2} for different time step values. 
	\label{table:pointwise-rom-error-cex-1-1}
	}  
\end{table} 

To investigate the sensitivity of our numerical results with respect to $k$ (i.e., the level of oscillations in counterexample 1), in Table~\ref{table:pointwise-rom-error-cex-1-2} we list the ratio~\eqref{eqn:pointwise-rom-error-cex-1-2} for $k=8$.
The results in Table~\ref{table:pointwise-rom-error-cex-1-2} confirm the results in Table~\ref{table:pointwise-rom-error-cex-1-1}, i.e., the pointwise ROM error in the noDQ case is suboptimal.

\begin{table}[h!] 
	\centering
	\begin{tabular}{|c | c| c| c|  } 
		\hline
		$\Delta t$ &  $1/2$ & $1/4$  & $1/8$ \\ 
		\hline\hline
	$\mathcal{C}_{rom}^{noDQ}$ & $3.75e-03$ & $6.371e-03$ 
	& $1.13-02$ \\
	\hline	
	\end{tabular}
	\caption {
	Counterexample 1~\eqref{eqn:cex-1}, $k=8$, noDQ case: 
	Ratio~\eqref{eqn:pointwise-rom-error-cex-1-2} for different time step values. 
	\label{table:pointwise-rom-error-cex-1-2}
	}  
\end{table}

\paragraph{\it{DQ Case}}
In the DQ case, we investigate numerically the error estimate proved in \eqref{eq:L2_L2_bound}:

\begin{align} 
    \label{eqn:pointwise-rom-error-cex-1-3}
\max_{1 \leq k \leq N} \|e^{k}\|_{L^{2}}^{2} =
\mathcal{O}
\left( \sum_{i=N+1}^{N+1} \lambda_{i}^{DQ}\|\varphi_{i} -R_{r}(\varphi_{i})\|^{2}_{L^{2}}  + \Delta t^{4}  \right),
\end{align}

We note that the error bound~\eqref{eqn:pointwise-rom-error-cex-1-3} is optimal.
In Table~\ref{table:pointwise-rom-error-cex-1-3}, we list the ratio

\begin{align} 
    C_{rom}^{DQ}
    =
    \left( \max_{1 \leq k \leq N} \|e^{k}\|_{L^{2}}^{2} \right) / 
    \left(\sum_{i=N+1}^{N+1} \lambda_{i}^{DQ}\|\varphi_{i} -R_{r}(\varphi_{i})\|^{2}_{L^{2}} 
    + \Delta t^{4}
    \right) .
    \label{eqn:pointwise-rom-error-cex-1-4}
\end{align}

The results in Table~\ref{table:pointwise-rom-error-cex-1-3} show that the ratio~\eqref{eqn:pointwise-rom-error-cex-1-4}, while increasing, seems to be bounded, as predicted by~\eqref{eqn:pointwise-rom-error-cex-1-3}.

\begin{table}[h!] 
	\centering
	\begin{tabular}{|c | c| c| c| c| c| c| } 
		\hline
		$\Delta t$ &  $1/4$ & $1/8$ & $1/16$ & $1/32$ & $1/64$ & $1/128$  \\ 
		\hline
	\hline
	$C_{rom}^{DQ}$ & $7.8e-02$ & $1.3e-01$ & $2.0e-01$ & $3.5e-01$ & $5.3e-01$ & $8.7e-01$ \\
		\hline																
	\end{tabular}
	\caption {
	Counterexample 1~\eqref{eqn:cex-1}, DQ case: 
	Ratio~\eqref{eqn:pointwise-rom-error-cex-1-4} for different time step values. 
	  \label{table:pointwise-rom-error-cex-1-3}
	}
\end{table}


The increase of $C_{rom}^{DQ}$ in Table~\ref{table:pointwise-rom-error-cex-1-3} is due to the highly oscillatory character of counterexample 1 in~\eqref{eqn:cex-1}, which makes the ROM simulation in the DQ case challenging.
To alleviate the highly oscillatory behavior of counterexample 1, we keep all the parameters unchanged and choose a lower $k$ value (i.e., $k=8$) in~\eqref{eqn:cex-1}, which yields a solution with fewer oscillations.
In Table~\ref{table:pointwise-rom-error-cex-1-4}, we list the ratio~\eqref{eqn:pointwise-rom-error-cex-1-4} for $k=8$.
The results in Table~\ref{table:pointwise-rom-error-cex-1-4} show that the ratio~\eqref{eqn:pointwise-rom-error-cex-1-4} is bounded, as predicted by~\eqref{eqn:pointwise-rom-error-cex-1-3}.

\begin{table}[h!] 
	\centering
	\begin{tabular}{|c | c| c| c| } 
		\hline
		$\Delta t$ &  $1/2$ & $1/4$ & $1/8$   \\ 
		\hline\hline
	$\mathcal{C}_{rom}^{DQ}$ & $4.73e-01$ & $5.92e-01$ & $2.55e-01$  \\
		\hline										
	\end{tabular}
	\caption {
   Counterexample 1~\eqref{eqn:cex-1}, $k=8$, and DQ case: 
	Ratio~\eqref{eqn:pointwise-rom-error-cex-1-4} for different time step values. 
	}  \label{table:pointwise-rom-error-cex-1-4}
\end{table} 


The numerical results in this section support the theoretical results in Section~\ref{sec:ErrEst}.
Specifically, for counterexample 1, the pointwise ROM error is optimal in the DQ case, and suboptimal in the noDQ case.

\subsection{Counterexample 2} 
    \label{sec:numerical-results-cex-2}
In this section, we consider counterexample 2, which was proposed in equation~\eqref{eqn:cex-2} of Section~\ref{sec:cex-2}.
In all the numerical experiments in this section, we consider $k=100$, $\delta = 0.01$, and $\alpha = 1$ in~\eqref{eqn:cex-2}. 
The numerical results are organized as follows:
In Section~\ref{sec:pointwise-projection-error-cex-2}, for both the noDQ and the DQ cases, we investigate numerically whether (i) Assumption~\ref{ass:LinftyTime} holds; and (ii) the pointwise projection error is optimal. 
In Section~\ref{sec:pointwise-rom-error-cex-2}, for both the noDQ and the DQ cases, we investigate numerically whether the pointwise ROM errors are optimal. 

\smallskip

As explained in Section~\ref{sec:cex-2}, counterexample 2 was constructed to display the suboptimality of the pointwise projection and ROM error bounds for any $r$ values.
In our numerical investigation, we consider general $r$ and $t=t_k$ values for both the pointwise projection error and the pointwise ROM error.

\subsubsection{Pointwise Projection Error} 
\label{sec:pointwise-projection-error-cex-2}
In this section, we investigate numerically whether Assumption~\ref{ass:LinftyTime} holds.
To this end, for various $\Delta t$ values, we investigate numerically whether the projection error~\eqref{eqn:pointwise-projection-error-5} 
at various time instances is suboptimal.
\begin{align}
    \displaystyle \Big\| \eta^{proj}(.,t_{r}) \Big\|_{L^2} = \Bigg\| u(.,t_{r})- \sum_{i=1}^{r} \Big( u(.,t_{r}), \varphi_i \Big)_{L^2} \varphi_i \Bigg\|_{L^2} , 
    \qquad r = 1, \ldots, N,
    \label{eqn:pointwise-projection-error-5}
\end{align}
Specifically, as shown in \eqref{eqn:counterexample_bound_below} in  Proposition~\ref{prop:counterexample} for counterexample 2 in the noDQ case, for fixed $r$ values, the projection error at $t=t_r$ satisfies 
\begin{align}
    \displaystyle 
    \Big\| \eta^{proj}(.,t_{r}) \Big\|_{L^2}^2 
    = C_{proj}^{noDQ} \, (N+1) \,
    \sum_{i = r+1}^{N+1} \lambda_i^{noDQ} \Big \| \varphi_i \Big \|_{L^2}^2 \, , 
   \label{eqn:pointwise-projection-error-6}
\end{align}
where
\begin{eqnarray}
    C_{proj}^{noDQ}
\geq  \frac{\min\{ 1, \gamma \}}{2} \, .
   \label{eqn:pointwise-projection-error-7}
\end{eqnarray}

Moreover, as shown in \eqref{eqn:POD_DQs_pointwise_bound2} for counterexample 2 in the DQ case, the projection error at various time instances satisfies 
\begin{align}
    \displaystyle 
	\max_{0 \leq k \leq N} \Bigg\| u(.,t_{k})- \sum_{i=1}^{r} \Big( u(.,t_{k}), \varphi_i \Big)_{L^2} \varphi_i \Bigg\|^2_{L^2} &\leq C_{proj}^{DQ}  \sum_{i = r+1}^d \lambda_i^\mathrm{DQ} \| \varphi_i \|_{L^2}, \label{eqn:pointwise-projection-error-8}
\end{align}
where
$C_{proj}^{DQ}$ is bounded from above.
In this section, we investigate numerically the scalings~\eqref{eqn:pointwise-projection-error-6}--\eqref{eqn:pointwise-projection-error-7} and~\eqref{eqn:pointwise-projection-error-8}.


\paragraph{\it{noDQ Case}}


In Table~\ref{table:noDQ-proj-scaling-table-cex-2}, for $r=4$, we list the scaling factor $\mathcal{C}_{proj}^{DQ}$ in~\eqref{eqn:pointwise-projection-error-6} for different time step values. 
As expected from~\eqref{eqn:pointwise-projection-error-7}, these results show that the scaling factor  is bounded from below.
Thus, we conclude that, in the noDQ case, counterexample 2 yields suboptimal pointwise projection errors.

\begin{table}[h!] 
	\centering
	\begin{tabular}{| c| c| c| c| c|  } 
		\hline
		$\Delta t$  & $0.05$ & $0.04$ & $0.02$ & $0.01$  \\ 
		\hline\hline
	$\mathcal{C}_{proj}^{noDQ}$ &  $1.00e+00$ & $9.82e-01$ & $8.65e-01$ & $6.32e-01$ \\
		\hline								
	\end{tabular}
	\caption {
		Counterexample 2~\eqref{eqn:cex-2}, $r= 4$, and noDQ case: 
	Scaling factor~\eqref{eqn:pointwise-projection-error-6} for different time step values. 
	} 
	\label{table:noDQ-proj-scaling-table-cex-2}
\end{table} 


\paragraph{\it{DQ Case}}


In Table~\ref{table:DQ-proj-scaling-table-cex-2}, for $r=4$, we list the scaling factor \eqref{eqn:pointwise-projection-error-8} for different time step values. As expected, these results show that the scaling factor is bounded from above. Thus, we conclude that, in the DQ case, counterexample 2 yields optimal pointwise projection errors.

\begin{table}[h!] 
	\centering
	\begin{tabular}{| c| c| c| c| c|  } 
		\hline
		$\Delta t$  & $ 0.05$  & $ 0.04 $ & $ 0.02 $  & $ 0.01 $ \\
		\hline\hline
	$\mathcal{C}_{proj}^{DQ}$  & $1.83e+00$ & $1.76e-02$ & $8.32e-03$ & $3.84e-03$ \\	
	\hline
	\end{tabular}
	\caption {Counterexample 2~\eqref{eqn:cex-2}, $r=4$, and DQ case: 
	Scaling factor~\eqref{eqn:pointwise-projection-error-8} for different time step values.} \label{table:DQ-proj-scaling-table-cex-2}
\end{table} 


The numerical results in this section support the theoretical results in Section~\ref{sec:pointwise-projection-error-estimates}.
Specifically, for a generic $r$ value, counterexample 2 satisfies Assumption~\ref{ass:LinftyTime} in the DQ case, but not in the noDQ case.
Furthermore, the pointwise projection error 
is optimal in the DQ case, and suboptimal in the noDQ case.

\subsubsection{Pointwise ROM Error}\label{sec:pointwise-rom-error-cex-2} 
In this section, we investigate 
whether the pointwise ROM error is suboptimal. 
We note that the time evolution of the analytical solution in counterexample 2 (which is displayed in Figure~\ref{fig:cex-2-plot}) 
prompted us to make the following parameter choices in the numerical investigation of the pointwise ROM error.
Since the magnitude of the analytical solution is significant on the time interval $[0,0.04]$ and almost negligible on the time interval $[0.04,0.2]$, we decided to compute the pointwise ROM errors for both the noDQ and the DQ cases on the time interval $[0, 0.05]$.
Furthermore, since the DQ ROM basis functions with large indices are very oscillatory, we decided to use low $r$ values in order to avoid numerical instabilities. 

 \begin{figure}[h!] 
 \begin{center}
 \includegraphics[width=0.48\textwidth,height=0.46\textwidth]{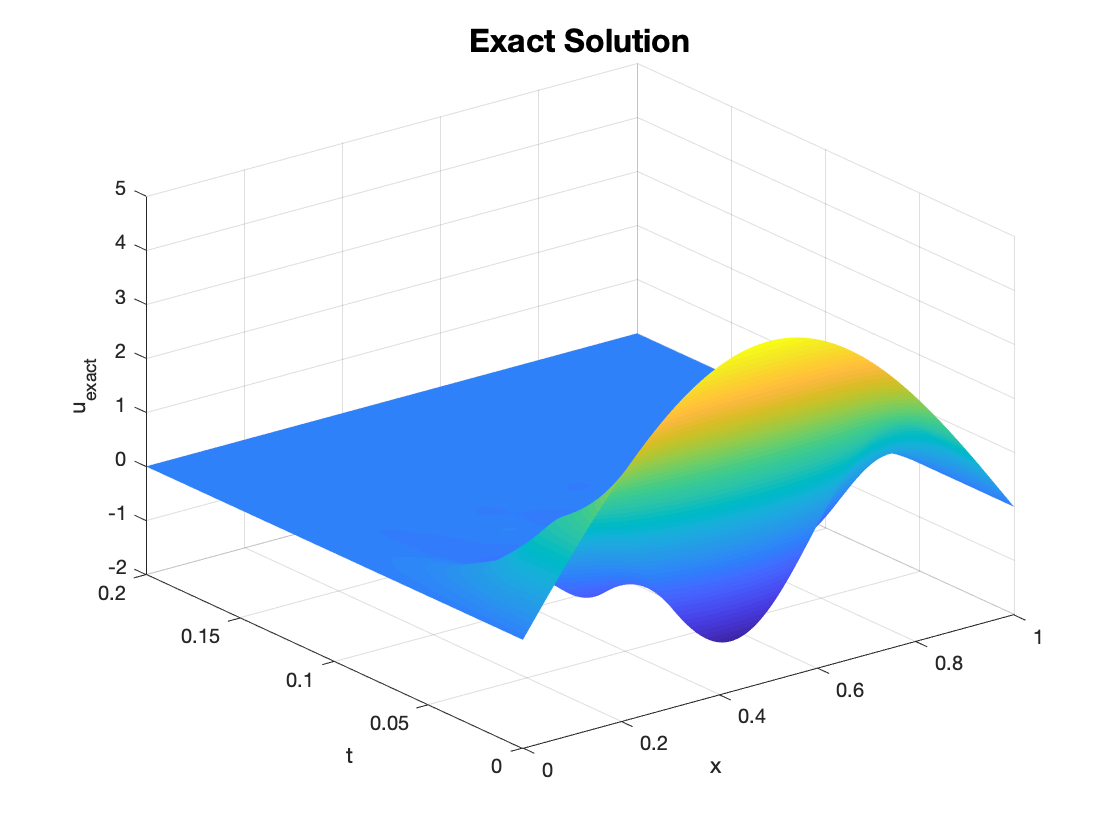}
  \caption{
Counterexample 2~\eqref{eqn:cex-2}, FOM plot:  
$h = 1/4096$ and $\Delta t = 0.02$.
  } \label{fig:cex-2-plot}
 \end{center} 
\end{figure}


\paragraph{\it{noDQ Case}}

In the noDQ case, we investigate numerically the error estimate proved in \eqref{eq:noDQ_error_estimates_L2basis}:
\begin{align} \label{eqn:pointwise-rom-error-cex-2-1}
\max_{1 \leq k \leq N} \|e^{k}\|_{L^{2}}^{2}
= \mathcal{O}
\left( (N+1) \, \sum_{i=r+1}^{N+1} \lambda_{i}^{noDQ} \|\varphi_{i}\|^{2}_{L^{2}} + \Delta t^4 + \sum_{i=r+1}^{N+1} \lambda_i^{noDQ} \| \nabla \varphi_i \|^2_{L^2}
\right) .
\end{align}

We note that, since the ROM initial condition is the $L^2$ projection of the initial condition, the term $\|\phi^{0}_r\|_{L^2}^{2}$ in~\eqref{eq:noDQ_error_estimates_L2basis} vanishes in~\eqref{eqn:pointwise-rom-error-cex-2-1}. 
As explained in Remark~\ref{remark:noDQ_error_estimates}, the error bound~\eqref{eqn:pointwise-rom-error-cex-2-1} is suboptimal with  respect to the time step due to the factor $(N+1) = (T \Delta t^{-1} + 1)$ in the first term on the right-hand side.
To investigate numerically the suboptimality  of the error bound~\eqref{eqn:pointwise-rom-error-cex-2-1}, in Table
\ref{table:pointwise-rom-error-cex-2-2} we list the ratio \eqref{eqn:pointwise-rom-error-cex-2-2} for fixed $\Delta t$ values and various $r$ values. 
The ratios in Table
\ref{table:pointwise-rom-error-cex-2-2} are bounded from below. 
Thus, we conclude that the pointwise ROM error in the noDQ case is suboptimal.

\begin{align}     \label{eqn:pointwise-rom-error-cex-2-2}
    C_{rom}^{noDQ}
    =
    \left( \max_{1 \leq k \leq N} \|e^{k}\|_{L^{2}}^{2} \right) /
    & \left( (N+1) \, \sum_{i=r+1}^{N+1} \lambda_{i}^{noDQ} \|\varphi_{i}\|^{2}_{L^{2}}
    \right. \\
    & \hspace*{0.6cm} 
    \left. 
    + \, \Delta t^4 + \sum_{i=r+1}^{N+1} \lambda_i^{noDQ} \| \nabla \varphi_i \|^2_{L^2}
    \right) .
    \nonumber 
\end{align}


\begin{table}[h!] 
	\centering
	\begin{tabular}{|c | c| c| c| c| c| c|  } 
		\hline
	$r$ & $ 1 $ & $2$  & $ 3 $ & $ 4 $ & $5$ & $6$  \\
		\hline\hline
	$\mathcal{C}_{rom}^{noDQ}$ & $1.7e-01$ & $9.8e-02$ & $1.1e-01$ & $2.2e-01$ &
	$4.4e-01$ & $9.2e-01$ \\
	\hline																
	\end{tabular}
	\caption {
    Counterexample 2~\eqref{eqn:cex-2} and noDQ case: 
	Ratio~\eqref{eqn:pointwise-rom-error-cex-2-2} for 
	fixed time step $\Delta t = 0.01$ and different $r$ values. 
	} \label{table:pointwise-rom-error-cex-2-2}
\end{table}


\paragraph{\it{DQ Case}}
In the DQ case, we investigate numerically the error estimate proved in \eqref{eq:L2_L2_bound}:

\begin{align} 
    \label{eqn:pointwise-rom-error-cex-2-3}
\max_{1 \leq k \leq N} \|e^{k}\|_{L^{2}}^{2} =
\mathcal{O}
\left( \sum_{i=r+1}^{N+1} \lambda_{i}^{DQ}\|\varphi_{i} -R_{r}(\varphi_{i})\|^{2}_{L^{2}}  + \Delta t^{4}  \right).
\end{align}

To investigate numerically the suboptimality  of the error bound~\eqref{eqn:pointwise-rom-error-cex-2-3}, in Table
~\ref{table:pointwise-rom-error-cex-2-4} we list the ratio \eqref{eqn:pointwise-rom-error-cex-2-2} for fixed $\Delta t$ values and various $r$ values. 
The ratios in Table
~\ref{table:pointwise-rom-error-cex-2-4} are bounded. 
Thus, we conclude that the pointwise ROM error in the DQ case is optimal.

\begin{align} 
    C_{rom}^{DQ}
    =
    \left( \max_{1 \leq k \leq N} \|e^{k}\|_{L^{2}}^{2} \right) / 
    \left(\sum_{i=r+1}^{N+1} \lambda_{i}^{DQ}\|\varphi_{i} -R_{r}(\varphi_{i})\|^{2}_{L^{2}} 
    + \Delta t^{4}
    \right) .
    \label{eqn:pointwise-rom-error-cex-2-4}
\end{align}


\begin{table}[h!] 
	\centering
	\begin{tabular}{|c | c| c| c| c| c| c| } 
		\hline
	$r$ & $ 1 $ & $2$  & $ 3 $ & $ 4 $  & $ 5 $ & $6$ \\
		\hline\hline
	$ \mathcal{C}_{rom}^{DQ}$ & $2.9e-03$ & $4.0e-03$ & $4.9e-03$ & $5.7e-03$ & $1.0e-02$ & $2.9e-02$ \\	
	\hline
	\end{tabular}
	\caption {Counterexample 2~\eqref{eqn:cex-2} and DQ case: 
	Ratio~\eqref{eqn:pointwise-rom-error-cex-2-4} for 
	fixed time step $\Delta t = 0.01$ and different $r$ values. 
	} \label{table:pointwise-rom-error-cex-2-4}	
\end{table}

The numerical results in this section support the theoretical results in Section~\ref{sec:ErrEst}.
Specifically, for counterexample 2, the pointwise ROM error is optimal in the DQ case, and suboptimal in the noDQ case.



\section{Conclusions}
    \label{sec:conclusions}


In this paper, we resolved several theoretical issues dealing with the optimality of pointwise in time error bounds for POD model order reduction of the heat equation.
In particular, we studied the role played by the DQs in the optimality of pointwise POD error bounds with respect to (i) the time discretization error, and (ii) the ROM discretization error.

First, in the noDQ case (i.e., when the DQs are not used to construct the POD basis), we proved that the error bound is suboptimal not only with respect to the ROM discretization (as shown in~\cite{iliescu2014are}), but also with respect to the time discretization.
Specifically, in Proposition \ref{prop:counterexample} we constructed two classes of analytical examples, and we proved that these examples violate Assumption \ref{ass:LinftyTime}, and yield suboptimal (with respect to the time discretization) pointwise projection error bounds.
Furthermore, we noted that these suboptimal pointwise projection error bounds yield suboptimal ROM error bounds (see Remark \ref{remark:noDQ_error_estimates}).
Finally, we illustrated the suboptimality of the pointwise projection and ROM error bounds in the numerical simulation of the heat equation.


Our second main contribution is Theorem \ref{thm:uniform_estimates}, where we proved that,
in the DQ case (i.e., when the DQs are  used to construct the POD basis), Assumption \ref{ass:LinftyTime} is always satisfied.
To prove Theorem \ref{thm:uniform_estimates}, in Lemma \ref{lemma:discrete_time_Sobolev_embedding} we first proved a discrete time Sobolev inequality for the DQ case. Next, in Section \ref{sec:ErrEst}, we used Theorem \ref{thm:uniform_estimates} to prove pointwise ROM error bounds that are optimal with respect to both the ROM discretization error and the  time discretization error in the DQ case. 
In Section~\ref{sec:numerical-results}, we illustrated the optimality of the pointwise projection and error bounds in the numerical simulation of the heat equation.


Our third main contribution is that, in Definition \ref{def:optimal}, we proposed a new  definition for the optimality of pointwise in time ROM discretization errors.
In Section \ref{sec:optimality}, we carefully discussed the relationship between this new optimality definition and the other two optimality definitions in current use.
In Theorem \ref{theorem:opt_equiv}, for two of the three optimality definitions, we showed that the DQ case yields optimal bounds, whereas the noDQ case yields suboptimal error bounds.  


Our theoretical and numerical investigations (see also~\cite{iliescu2014are,KV01,singler2014new}) show that the DQs are needed to prove optimal pointwise in time error bounds.
There are, however, several research directions that need to be investigated.


At a theoretical level, the uniform boundedness type conditions for non-orthogonal
POD projections considered in Proposition \ref{prop:compare_optimality_types} and Theorem \ref{theorem:opt_equiv} are important in proving some of the optimal pointwise ROM error bounds.
These type of uniform boundedness conditions have been studied both theoretically and numerically in~\cite{chapelle2012galerkin,iliescu2014are,KeanSchneier20,LockeSingler20,singler2014new,xie2018numerical}, but they are not well understood.
Further investigation of these conditions is needed.
Additionally, at a theoretical level we  considered optimal uniform estimates only for the heat equation. 
How these estimates will extend to more complicated nonlinear PDEs (e.g., the Navier-Stokes equations) is an open problem.
In this paper, we considered equally spaced snapshots to construct the POD basis.
The POD adaptivity in time (see, e.g.,~\cite{alla2018posteriori,hoppe2014snapshot,kunisch2010optimal,oxberry2017limited} and the survey in~\cite{grassle2019adaptivity}) aims at choosing snapshot time instances that are optimal in some sense (e.g., such that the error between the ROM and FOM trajectories is minimized~\cite{kunisch2010optimal}). 
The effect of POD adaptivity in time on the optimality of error bounds in the noDQ and DQ cases should also be investigated.

At a numerical level, further investigation of the role of DQs in practical computations is needed.
The theoretical and numerical results in this paper focus exclusively on the optimality of the rates of convergence of ROM error bounds, but do not address the absolute size of the ROM error.
In our numerical investigation, the size of the ROM error was of the same order in the noDQ and DQ cases (results not included). 
In the current literature, the results do not yield a clear conclusion: 
In some references~\cite{homberg2003control,kostova2018model}, the ROM error is lower in the DQ case than in the noDQ case; in other references~\cite{iliescu2014are,KeanSchneier20,KV01}, the situation is reversed.
Further investigation of the relative size of the ROM error in the noDQ and DQ cases is needed.

\bibliographystyle{abbrv}
\bibliography{references}

\end{document}